\newtheorem{definition}{Definition}
\newtheorem{lemma}[definition]{Lemma}
\newtheorem{proposition}[definition]{Proposition}
\newtheorem{theorem}[definition]{Theorem}
\newtheorem{conjecture}[definition]{Conjecture}
\newtheorem{corollary}[definition]{Corollary}
\newtheorem{algorithm}[definition]{Algorithm}
\theoremstyle{remark}
\newtheorem{remark}[definition]{Remark}
\newcounter{enumAlgoC}
\newenvironment{enumAlgo}{\begin{list}{\footnotesize \arabic{enumAlgoC}}{
\usecounter{enumAlgoC}
\setlength{\itemsep}{.3em}
\setlength{\topsep}{.3em}
\setlength{\leftmargin}{1.5em}
\setlength{\labelwidth}{1.5em}
\setlength\labelsep{.7em}
}}{\end{list}}
\newlength{\algleftskip}
\newlength{\algboxwidth}
\newcommand{\algbox}[2]{%
\vskip.3em\par\noindent\hspace{\algleftskip}\parbox[t]{\algboxwidth}{\hskip-\algleftskip\textit{#1:\hspace{1ex}}#2\vskip.2em}\par}
\newcommand{\Input}[1]{\algbox{Input}{#1}}
\newcommand{\Output}[1]{\algbox{Output}{#1}}
\newcommand{\Return}[1]{\vskip-.3em\algbox{Return}{#1}}
\newcommand{\algpart}[1]{\item[]\vskip.5em\hspace*{-1.5em}\textit{(#1)}}
\tikzstyle default=[baseline=-.3em]
\tikzstyle mtrx=[matrix of math nodes,row sep=1.2em,column sep=1.5em, text height=1.5ex, text depth=0.25ex]
\tikzstyle sizeM=[row sep=1.5em,column sep=2em]
\tikzstyle sizeL=[row sep=2em,column sep=2.5em]
\tikzstyle flds=[row sep=2em,column sep=2em]
\tikzstyle ar=[above, shift={(-0.05,-0.05)}]
\tikzstyle arr=[above, shift={(-0.05,-0.1)}]
\tikzstyle r=[right,shift={(-0.05,0)}]
\DeclareMathOperator{\coker}{coker}
\DeclareMathOperator{\Gl}{Gl}
\newcommand{\Gal}{\mathrm{Gal}}
\newcommand{\Ext}{\mathrm{Ext}}
\DeclareMathOperator{\im}{im}
\newcommand{\ind}{\mathrm{ind}}
\newcommand{\inv}{\mathrm{inv}}
\newcommand{\Irr}{\textrm{Irr}}
\DeclareMathOperator{\Nr}{N}
\DeclareMathOperator{\nr}{nr}
\DeclareMathOperator{\Quot}{Quot}
\newcommand{\tor}{\mathrm{tor}}
\DeclareMathOperator{\Zent}{Z}
\newcommand{\NN}{\mathbb{N}}
\newcommand{\ZZ}{\mathbb{Z}}
\newcommand{\QQ}{\mathbb{Q}}
\newcommand{\RR}{\mathbb{R}}
\newcommand{\CC}{\mathbb{C}}
\newcommand{\calD}{\mathcal{D}}
\newcommand{\calI}{\mathcal{I}}
\newcommand{\calL}{\mathcal{L}}
\newcommand{\calM}{\mathcal{M}}
\newcommand{\calO}{\mathcal{O}}
\newcommand{\fra}{\mathfrak{a}}
\newcommand{\frf}{\mathfrak{f}}
\newcommand{\frg}{\mathfrak{g}}
\newcommand{\frm}{\mathfrak{m}}
\newcommand{\frP}{\mathfrak{P}}
\newcommand{\frp}{\mathfrak{p}}
\newcommand{\frQ}{\mathfrak{Q}}
\newcommand{\frq}{\mathfrak{q}}
\newcommand{\scrE}{\mathscr{E}}
\newcommand{\scrL}{\mathscr{L}}
\newcommand{\riightarrow}{\hbox to 3ex{\rightarrowfill}}
\newcommand{\loongrightarrow}{\hbox to 5ex{\rightarrowfill}}
\newcommand{\looongrightarrow}{\hbox to 6ex{\rightarrowfill}}
\newcommand{\vlongrightarrow}[1]{\hbox to #1{\rightarrowfill}}
\newcommand{\hooklongrightarrow}{\lhook\joinrel\longrightarrow}
\newcommand{\simeqarrow}{\stackrel{\!\simeq}\riightarrow}
\newcommand{\simeqlongarrow}{\stackrel{\!\simeq}\longrightarrow}
\newcommand{\TateH}{\hat H}
\newcommand{\EPS}{{\rm EPS}}
\newcommand{\EPSloc}{\EPS^{\rm loc}}
\newcommand{\frob}{\varphi}
\newcommand{\wtilde}[1]{\hspace*{.5ex}\widetilde{\hspace{-.5ex}#1}}
\newcommand{\what}[1]{\hspace*{.3ex}\widehat{\hspace{-.3ex}#1}}
\newsavebox{\tildebox}%
\newlength{\tildelen}%
\newcommand{\mytilde}[1]{%
\sbox{\tildebox}{$#1$}%
\settowidth{\tildelen}{\usebox{\tildebox}}%
\raisebox{2.25pt}{$\wtilde{\hspace{\tildelen}}$}%
\hspace{-\tildelen}\hspace{0pt}\usebox{\tildebox}}
\renewcommand{\eqref}[1]{\hyperref[#1]{(\ref*{#1})}}
\def\subStr[#1:#2]{#1}
\def\refPrefix#1{\expandafter\subStr[#1]}
\renewcommand{\autoref}[1]{%
\hyperref[#1]{%
\edef\pref{\refPrefix{#1}}%
\ifthenelse{\equal{\pref}{prop}}{Proposition~}{%
\ifthenelse{\equal{\pref}{lem}}{Lemma~}{%
%\ifthenelse{\equal{\pref}{ex}}{Example}{%
%\ifthenelse{\equal{\pref}{def}}{Definition}{%
\ifthenelse{\equal{\pref}{thm}}{Theorem~}{%
\ifthenelse{\equal{\pref}{alg}}{Algorithm~}{%
\ifthenelse{\equal{\pref}{rem}}{Remark~}{%
\ifthenelse{\equal{\pref}{sec}}{§}{%
\ifthenelse{\equal{\pref}{cor}}{Corollary~}{%
\ifthenelse{\equal{\pref}{conj}}{Conjecture~}{%
}}}}}}}}\ref*{#1}}}
\begin{document}

% \title[short text for running head]{full title}
\title{Algorithmic Proof of the Epsilon Constant Conjecture}

\author[W. Bley]{Werner Bley}
\address{Werner Bley\\
Universität München\\
Theresienstr. 39\\
80333 München\\
Germany}
\email{bley@math.lmu.de}

\author[R. Debeerst]{Ruben Debeerst}
\address{Ruben Debeerst\\
Universität Kassel\\
Heinrich-Plett-Str. 40\\
34132 Kassel\\
Germany}
\email{debeerst@math.uni-kassel.de}
\thanks{}

%    \subjclass is required.
\subjclass[2000]{Primary 11Y40. Secondary 11R33, 11S25}

% Local fields
% 11S25  Galois cohomology [See also 12Gxx, 16H05]
% 11S31  Class field theory; $p$-adic formal groups [See also 14L05]
% Global fields
% 11R33  Integral representations related to algebraic numbers; Galois module structure of rings of integers
% 11R37  Class field theory
% 11R44  Distribution of prime ideals
% 

\date{Version of 23rd Feb 2012}

\keywords{epsilon constant conjecture, local fundamental classes}

\thanks{The second author was supported by DFG grant BL 395/3-1}

%    Abstract is required.
\begin{abstract}
In this paper we will algorithmically prove
the global epsilon constant conjecture for all Galois extensions $L/\QQ$ of degree
at most $15$. In fact, we will obtain a slightly more general result whose proof is based
on an algorithmic proof of the local epsilon constant conjecture for Galois extensions
$E/\QQ_p$ of small degree. 
To this end we will present an efficient algorithm for the
computation of local fundamental classes and address several
other problems arising in the algorithmic proof of the local conjecture.
\end{abstract}

\maketitle

\section{Introduction}
\label{sec:into}
For a tamely ramified Galois extension $L/K$ of number fields
with Galois group $G$,
the ring of integers $\calO_L$ has been studied as a projective $\ZZ[G]$-module.
Cassou-Nogu\`{e}s and Fröhlich defined a root number class $W_{L/K}$ 
associated to the epsilon constants occuring in the functional equation of Artin $L$-functions,
and it was conjectured by Fröhlich and proved by Taylor in 1981
that this class is equal to the class of $\calO_L$ 
in the reduced projective class group ${\rm Cl}(\ZZ[G])$,
see~\cite{Froe:GalMod,Taylor}.

In 1985 Chinburg defined an element $\Omega(L/K,2)$ in ${\rm Cl}(\ZZ[G])$
for arbitrary Galois extensions $L/K$ with $\Gal(L/K) = G$
using cohomological methods and proved that it
matches the class  of $\calO_L$ for tamely ramified extensions.
His $\Omega(2)$-conjecture, stating the equality of $\Omega(L/K,2)$
and $W_{L/K}$ in ${\rm Cl}(\ZZ[G])$,
therefore generalizes Fröhlich's conjecture
to wildly ramified extensions, cf.~\cite{Chin:ExactSeq}.

Later, Burns and the first named  author formulated in \cite{BleyBur:Equiv} a conjectural description of
epsilon constants in the relative algebraic $K$-group $K_0(\ZZ[G],\RR)$,
which implies Chinburg's $\Omega(2)$-conjecture 
via the canonical surjection $K_0(\ZZ[G],\RR)\twoheadrightarrow {\rm Cl}(\ZZ[G])$.
More precisely, they define an equivariant epsilon constant  $\scrE_{L/K}$ in $K_0(\ZZ[G],\RR)$
and an element involving algebraic invariants,
which project to the root number class and to $\Omega(L/K,2)$, respectively.
If we denote the difference of $\scrE_{L/K}$ and the algebraic invariant by  $T\Omega^{\rm loc}(L/K,1)$, 
then the global epsilon constant conjecture predicts
the vanishing of $T\Omega^{\rm loc}(L/K,1)$ in $K_0(\ZZ[G],\RR)$.
Burns and the first named  author proved their conjecture for tamely ramified
extensions and for abelian extensions of $\QQ$ with odd conductor.
They also proved that $T\Omega^{\rm loc}(L/K,1)$
is an element of the torsion subgroup $K_0(\ZZ[G],\QQ)_\tor$ of $K_0(\ZZ[G],\RR)$,
see~\cite[Cor.\ 6.3]{BleyBur:Equiv}.
%The results of Burns and Flach in \cite{BurFla2} imply that the 
%conjecture is true for all abelian extensions of $\QQ$.
The results of Breuning and Burns in \cite{BreBur:Dedekind} imply that the
conjecture is true for all abelian extensions of $\QQ$.

The global epsilon constant conjecture fits into the more general framework of the
equivariant Tamagawa number conjecture (ETNC) formulated by Burns and Flach in \cite{BurFla}.
ETNC conjecturally describes the leading term of an  equivariant motivic $L$-function at integer arguments
by cohomological invariants.
In the number field case, the global epsilon constant conjecture of \cite{BleyBur:Equiv} 
is in fact motivated by the conjectured compatibility of the ETNC 
for the leading terms of  Artin $L$-functions at $s=0$ and $s=1$ with the functional equation.
In \cite{BreBur:07:LeadingTerms} Burns and Breuning study explicit variants of these two conjectures
and the relation to the global epsilon constant conjecture.
More recently, they proved in \cite{BreBur:Dedekind} assuming Leopoldt's conjecture
that their explicit conjecture
at $s=1$ is equivalent to the relevant case of the ETNC.

The decomposition $K_0(\ZZ[G],\QQ)=\bigoplus_p K_0(\ZZ_p[G],\QQ_p)$
splits %the element 
$T\Omega^{\rm loc}(L/K,1)$ into $p$-parts.
This has been further refined by Breuning in \cite{Breu:LocEps}
stating an independent conjecture for Galois extensions $E/F$ of local number fields in
the group $K_0(\ZZ_p[D],\QQ_p)$ where $D = \Gal(E/F)$.
He defined an element $R_{E/F}\in K_0(\ZZ_p[D], \QQ_p)$
incorporating local epsilon constants and algebraic invariants
associated to the local number field extension $E/F$
and conjectured the vanishing of $R_{E/F}$.
Breuning proved his local epsilon constant conjecture in \cite{Breu:LocEps}
for tamely ramified extensions, for abelian extensions of $\QQ_p$
with $p\neq2$, for all $S_3$-extensions,
and for certain infinite families of dihedral and quaternion extensions.

Moreover, this local conjecture is related to the global conjecture by
the equation
$T\Omega^{\rm loc}(L/K,1)_p = \sum_v {\rm i}_{G_w}^G (R_{L_w/K_v})$
where $v$ runs through all places of $K$ above $p$, $w$ is a fixed place of $L$ above $v$,
$G_w$ denotes the decomposition group
and ${\rm i}_{G_w}^G$ is the induction map on 
the relative $K$-group, % induced by the quotient $G/G_w$,
cf.\ \cite[Thm.\ 4.1]{Breu:LocEps}.

We fix a base field $K$ and a finite group $G$. 
Using the result for tame extensions,
one concludes that the validity of the global conjecture
for all Galois extensions $L/K$ with $\Gal(L/K)$ isomorphic to $G$ 
depends upon the validity
of the local conjecture for only finitely many local
extensions.

Subsequently, Breuning and the first named  author presented an % theoretical
algorithm in \cite{BleyBreu:ExactAlgo} which proves the local epsilon constant conjecture
for a given local number field extension.
To establish a practical algorithm, there were, however,
still some tasks which needed a more efficient solution.
In this paper we will address these problems, give solutions,
and present computational results.

These computations together with known theoretical results will prove:
\begin{theorem}
\label{thm:loceps}

a) If $p$ is odd, then the local epsilon constant conjecture is valid for all
 Galois extensions $E$ of $\QQ_p$ with $[E:\QQ_p]\le 15$.

b) If $p=2$, then the  local epsilon constant conjecture is valid for all
non-abelian Galois extensions $E$ of $\QQ_2$ with $[E:\QQ_2]\le 15$. In addition,
it is valid for all abelian extensions $E$ of $\QQ_2$ with $[E:\QQ_p]\le 7$.

\end{theorem}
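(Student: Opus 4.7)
The plan is to combine the existing theoretical results on the local conjecture with explicit computer calculations, applying the algorithm of Bley--Breuning in \cite{BleyBreu:ExactAlgo} (equipped with the efficiency improvements developed in the remainder of this paper) to a finite list of remaining cases.

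I would first use the known theorems recalled in the introduction to reduce the statement to an explicit finite enumeration problem. Breuning's result for tamely ramified extensions disposes of every $E/\QQ_p$ with $p \nmid e(E/\QQ_p)$, so only wildly ramified extensions remain; together with the degree bound $[E:\QQ_p] \le 15$, this restricts the residue characteristic to $p \in \{2,3,5,7,11,13\}$. In part (a) Breuning's theorem for abelian extensions of $\QQ_p$ with $p$ odd further eliminates all abelian cases, and his results for $S_3$-extensions and for certain infinite families of dihedral and quaternion extensions cut the list down further. In part (b) the hypothesis on $p = 2$ is precisely tailored to exclude those abelian extensions for which the algorithm is out of reach with current resources, while retaining the non-abelian ones and the abelian ones up to degree $7$.

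Next I would enumerate the finitely many remaining Galois extensions $E/\QQ_p$ of degree at most $15$ — this can be done by traversing the Jones--Roberts database of $p$-adic fields and closing under Galois closure, then filtering by degree and by isomorphism class of Galois group. For each such $E$ the algorithm of \cite{BleyBreu:ExactAlgo} produces a representative of the element $R_{E/\QQ_p} \in K_0(\ZZ_p[D],\QQ_p)$ with $D = \Gal(E/\QQ_p)$, and since $R_{E/\QQ_p}$ is a priori of finite order in $K_0(\ZZ_p[D],\QQ_p)$, the conjecture is established for $E$ by checking that this representative vanishes exactly. Applying this procedure case by case and recording a zero output in every instance yields both (a) and (b).

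The main obstacle is that a naive implementation of the algorithm of \cite{BleyBreu:ExactAlgo} is infeasible already for moderate degrees, the principal bottlenecks being the computation of a local fundamental class for $E/\QQ_p$, the explicit evaluation of local epsilon constants, and the construction of the $K_1$-representatives needed to realise $R_{E/\QQ_p}$ concretely. Accordingly, the substantive content of the proof lies in designing efficient subroutines for each of these tasks — in particular an efficient algorithm for local fundamental classes, as advertised in the abstract — so that the resulting implementation actually terminates on the full reduced list. Once these building blocks are in place, the proof of Theorem~\ref{thm:loceps} reduces to running the implementation over that list and verifying that $R_{E/\QQ_p} = 0$ in every case.
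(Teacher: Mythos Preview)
Your proposal is correct and follows essentially the same route as the paper: reduce via Breuning's results for tame and (odd-$p$) abelian extensions to a finite list of wildly ramified cases over $\QQ_p$ with $p\le 7$, then verify $R_{E/\QQ_p}=0$ case by case with the algorithm of \cite{BleyBreu:ExactAlgo}. The one substantive ingredient you omit is that this algorithm does not operate on $p$-adic fields directly but requires as input a \emph{global representation} --- a Galois extension $L/K$ of number fields with completions $L_w\simeq E$, $K_v=\QQ_p$ and $[L:K]=[E:\QQ_p]$ --- so that all arithmetic can be performed exactly; producing such representations of manageable degree (ideally with $K=\QQ$) is itself a nontrivial step, handled in the paper via the Kl\"uners--Malle database, parametric polynomials, and class-field-theoretic constructions (\autoref{sec:globalrep}), and is in fact the underlying reason the abelian $2$-adic cases of degree $8$--$15$ had to be excluded in part~(b).
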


\begin{remark}
  In the statement of Theorem \ref{thm:loceps} we only considered extensions $E/\QQ_p$ of degree
$\le 15$ because only in these cases our computations led to new results. As already mentioned, the
local epsilon constant conjecture is proven by Breuning for all tamely ramified extensions ($p$ arbitrary) and all abelian
extensions $E/\QQ_p$ with $p \ne 2$.

We point out that we could not prove the local conjecture for wildly ramified abelian extensions $E/\QQ_2$ of degree
$8 \le [E : \QQ_2] \le 15$. The main reason for this is that the unramified extension of degree 8 over $\QQ_2$
cannot be represented as the completion of a Galois extension $L/\QQ$ of degree 8.%
\footnote{This is Wang's counterexample to Grunwald's original statement of his theorem.}
Instead, we have to use an extension $L/K$ of degree 8, with $[L:\QQ]=16$ and $[K:\QQ]=2$,
and this increases the complexity of the computations a lot.
%\footnote{For example, for $L$ one can use the splitting field of $x^8-3x^5-x^4+3x^3+1$.}
\end{remark}

The above relation between
$T\Omega^{\rm loc}(L/K,1)_p$ and $R_{L_w/K_v}$ will imply results for global Galois extensions $L/\QQ$
which satisfy the following property. 

\medskip
\noindent
{\bf Property $(*)$}
We say that the Galois extension $L/K$ of number fields satisfies Property $(*)$ if for every wildly
ramified place $v$ of $K$ with $w | v | p$ one of the following cases is satisfied
\begin{align*}
  a) \quad& K_v = \QQ_p, p>2 \text{ and } G_w \text{ is abelian},\\
  b) \quad& K_v = \QQ_p, p=2, G_w \text{ is abelian and } |G_w|\le 7\\
  c) \quad& K_v = \QQ_p, p\ge2, G_w \text{ is non-abelian and } |G_w|\le 15. \\[-.5em]
%  c) \quad& K_v = \QQ_p, p=2, G_w \text{ is non-abelian and } |G_w|\le 15 \\
\end{align*}

\begin{corollary}
\label{cor:globeps}
The global epsilon constant conjecture is valid for all
Galois extensions $L/K$ which satisfy Property $(*)$.
\end{corollary}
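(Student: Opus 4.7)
The plan is to exploit the $p$-local decomposition $K_0(\ZZ[G],\QQ) = \bigoplus_p K_0(\ZZ_p[G],\QQ_p)$ together with Breuning's localisation formula recalled above, and to verify that every local term is handled either by the tame case of the local conjecture or by Theorem \ref{thm:loceps} (or a previously known case of the local conjecture). Since by \cite[Cor.\ 6.3]{BleyBur:Equiv} the element $T\Omega^{\rm loc}(L/K,1)$ lies in $K_0(\ZZ[G],\QQ)_\tor$, proving its vanishing in $K_0(\ZZ[G],\RR)$ is equivalent to showing that each $p$-component $T\Omega^{\rm loc}(L/K,1)_p \in K_0(\ZZ_p[G],\QQ_p)$ vanishes.

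Fix a rational prime $p$ and apply the formula
\[
T\Omega^{\rm loc}(L/K,1)_p = \sum_v {\rm i}_{G_w}^G (R_{L_w/K_v})
\]
from \cite[Thm.\ 4.1]{Breu:LocEps}, where $v$ runs through the places of $K$ above $p$. It therefore suffices to show $R_{L_w/K_v} = 0$ for each such $v$. For places $v$ at which $L/K$ is unramified or tamely ramified, this vanishing is Breuning's tame case of the local epsilon constant conjecture, proved in \cite{Breu:LocEps}. For the remaining, wildly ramified, places, Property $(*)$ forces $L_w/K_v$ into one of its three situations (a), (b), (c), and each is covered by an existing vanishing result: case (a) by Breuning's result for abelian extensions of $\QQ_p$ with $p$ odd, and cases (b) and (c) by Theorem \ref{thm:loceps}(b) and Theorem \ref{thm:loceps}(a)/(b), respectively.

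There is essentially no obstacle once Theorem \ref{thm:loceps} is in hand: Property $(*)$ has been formulated precisely so that every wildly ramified completion $L_w/K_v$ falls within the range covered either by the new computations or by Breuning's earlier work, while the remaining places are automatically tame. The corollary therefore reduces to a formal bookkeeping exercise on top of the localisation formula, with all of the mathematical content already packaged into the local theorem.
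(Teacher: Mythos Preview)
Your argument is correct and is essentially the paper's own proof: both use the local--global formula $T\Omega^{\rm loc}(L/K,1)_p = \sum_{v\mid p} {\rm i}_{G_w}^G(R_{L_w/K_v})$ (equivalently \autoref{thm:localglobal}\,(iii)), dispose of the tamely ramified places by Breuning's tame result, and handle the wildly ramified places via Property~$(*)$ together with Breuning's abelian result for $p\neq 2$ and \autoref{thm:loceps}. The invocation of \cite[Cor.~6.3]{BleyBur:Equiv} is harmless but not needed, since $T\Omega^{\rm loc}(L/K,1)\in K_0(\ZZ[G],\QQ)$ is already recorded before \autoref{conj:glob}.
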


The projection onto the class group also proves
Chinburg's conjecture:

\begin{corollary}
\label{cor:omega2}
Chinburg's $\Omega(2)$-conjecture is valid for all
Galois extensions $L/K$  which satisfy Property $(*)$.
\end{corollary}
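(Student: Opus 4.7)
The plan is to deduce this immediately from Corollary~\ref{cor:globeps} by functoriality with respect to the canonical surjection
$$ \pi \colon K_0(\ZZ[G],\RR) \twoheadrightarrow \mathrm{Cl}(\ZZ[G]) $$
mentioned in the introduction. First I would invoke Corollary~\ref{cor:globeps} to conclude that $T\Omega^{\rm loc}(L/K,1) = 0$ in $K_0(\ZZ[G],\RR)$ for every Galois extension $L/K$ satisfying Property~$(*)$. Applying the surjection $\pi$ then gives $\pi\bigl(T\Omega^{\rm loc}(L/K,1)\bigr) = 0$ in $\mathrm{Cl}(\ZZ[G])$.

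It remains to identify this image with $\Omega(L/K,2) - W_{L/K}$. This is precisely the compatibility built into the definition of $T\Omega^{\rm loc}(L/K,1)$ in \cite{BleyBur:Equiv}: by construction the equivariant epsilon constant $\scrE_{L/K}$ projects under $\pi$ to the Cassou-Nogu\`es--Fröhlich root number class $W_{L/K}$, while the algebraic invariant piece projects to Chinburg's element $\Omega(L/K,2)$, as already recalled in the introduction. Hence $\pi(T\Omega^{\rm loc}(L/K,1)) = 0$ is exactly the statement $\Omega(L/K,2) = W_{L/K}$, which is Chinburg's $\Omega(2)$-conjecture. There is no genuine obstacle; the lift of Chinburg's conjecture to the relative $K$-group was designed precisely so that such a functorial deduction is formal, so the only work is to cite the two ingredients (Corollary~\ref{cor:globeps} and the compatibility from \cite{BleyBur:Equiv}).
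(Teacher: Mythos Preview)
Your proposal is correct and follows essentially the same approach as the paper: the paper simply states that Corollary~\ref{cor:omega2} follows from Corollary~\ref{cor:globeps} via the projection $K_0(\ZZ[G],\RR)\twoheadrightarrow\mathrm{Cl}(\ZZ[G])$, exactly as you argue. Your write-up is in fact more detailed than the paper's one-line justification.
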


Moreover, the functorial properties of \cite[Prop.\ 3.3]{Breu:LocEps}
imply the following result:

\begin{corollary}
\label{cor:relative}
The global epsilon constant conjecture and Chinburg's $\Omega(2)$-conjecture
are valid for global Galois extensions $E/F$
for which $K \subseteq F \subseteq E \subseteq L$ with a Galois extension $L/K$  that satisfies Property~$(*)$.
\end{corollary}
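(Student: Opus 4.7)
The plan is to reduce the corollary for $E/F$ to the already established Corollary~\ref{cor:globeps} by invoking the functoriality of the local epsilon constant conjecture from \cite[Prop.~3.3]{Breu:LocEps}. Set $H = \Gal(E/F)$. First I would use the $p$-adic decomposition together with the formula
$$T\Omega^{\rm loc}(E/F,1)_p \;=\; \sum_{u\mid p} \mathrm{i}_{H_{u'}}^{H}\bigl(R_{E_{u'}/F_u}\bigr)$$
of \cite[Thm.~4.1]{Breu:LocEps}, where for each place $u$ of $F$ I fix a place $u'$ of $E$ above $u$. In this way the global statement is reduced to showing that the local invariant $R_{E_{u'}/F_u}$ vanishes at every place $u$ of $F$.

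Next I would distinguish cases according to the ramification of $u$ in $E/F$. Fix a place $w$ of $L$ above $u'$, and let $v$ be the place of $K$ below $u$, so that $K_v \subseteq F_u \subseteq E_{u'} \subseteq L_w$. If $u$ is at most tamely ramified in $E/F$, then Breuning's proof of the local conjecture for tame extensions gives $R_{E_{u'}/F_u} = 0$ directly. If $u$ is wildly ramified in $E/F$, then $v$ must be wildly ramified in $L/K$ as well, because the ramification index of $E_{u'}/F_u$ divides that of $L_w/K_v$; hence Property~$(*)$ applies to $L_w/K_v$, and Theorem~\ref{thm:loceps} yields $R_{L_w/K_v} = 0$. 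It then remains to descend this vanishing along the local tower to $R_{E_{u'}/F_u}$. Combining all places, $T\Omega^{\rm loc}(E/F,1)$ vanishes in $K_0(\ZZ[H],\QQ)_{\tor}$, which is the global epsilon constant conjecture for $E/F$, and projection to $\mathrm{Cl}(\ZZ[H])$ yields Chinburg's $\Omega(2)$-conjecture.

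The one point I expect to require care is this final descent step. \cite[Prop.~3.3]{Breu:LocEps} supplies two separate compatibilities for the local invariant: restriction from $\Gal(L_w/K_v)$ to the subgroup $\Gal(L_w/F_u)$, which, starting from $R_{L_w/K_v}=0$, delivers $R_{L_w/F_u}=0$; and passage from $\Gal(L_w/F_u)$ to the quotient $\Gal(E_{u'}/F_u)$ by the normal subgroup $\Gal(L_w/E_{u'})$, which then delivers $R_{E_{u'}/F_u}=0$. Both operations must be applied in sequence, and one must check along the way that all the extensions in question are indeed Galois, which they are since $E/F$ and $L/K$ are Galois by assumption. Once this bookkeeping is in place, no new ingredients beyond Theorem~\ref{thm:loceps} and the cited functoriality are required.
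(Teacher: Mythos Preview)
Your argument is correct and follows exactly the route the paper indicates: the local invariants $R_{L_w/K_v}$ vanish by the proof of \autoref{cor:globeps}, and then the local functoriality of \cite[Prop.~3.3]{Breu:LocEps} (restriction to $\Gal(L_w/F_u)$, then passage to the quotient by $\Gal(L_w/E_{u'})$) descends this to $R_{E_{u'}/F_u}=0$. One small point of precision: in case~(a) of Property~$(*)$ the order $|G_w|$ is unbounded, so the vanishing of $R_{L_w/\QQ_p}$ there comes from Breuning's result for abelian extensions of $\QQ_p$ with $p\neq 2$ rather than from \autoref{thm:loceps} alone---but this is exactly what the proof of \autoref{cor:globeps}, which you invoke at the outset, already supplies.
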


\begin{remark}\label{A5 remark}
If $L/\QQ$ is a Galois extension with $\Gal(L/\QQ) \simeq A_5$, the alternating group of order $60$, then
$L/\QQ$ satisfies Property $(*)$. More generally, if $K$ is a number field where $2,3$ and $5$ are completely split,
then each $A_5$-extension $L/K$ satisfies Property~$(*)$.
\end{remark}

\begin{corollary}
\label{cor:deg 15}
The global epsilon constant conjecture and Chinburg's $\Omega(2)$-conjecture are valid for
all global Galois extensions $E/F$ for which  $\QQ\subseteq F \subseteq E \subseteq L$
with a Galois extension $L/\QQ$ of degree $[L : \QQ] \le 15$. 
\end{corollary}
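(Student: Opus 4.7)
The plan is to split the Galois extensions $L/\QQ$ of degree at most $15$ into a non-abelian case, which I will handle via Property $(*)$, and an abelian case, which is subsumed by the Breuning--Burns theorem already cited in the introduction.

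\emph{Non-abelian case.} Let $L/\QQ$ be Galois with $G:=\Gal(L/\QQ)$ non-abelian and $|G|\le 15$. I claim $L/\QQ$ satisfies Property $(*)$. Let $v$ be a wildly ramified place of $\QQ$ above a rational prime $p$, let $w$ be a place of $L$ above $v$, and let $D := G_w \le G$ be the decomposition group. Then $|D|$ divides $|G|\le 15$, so if $|D|\ge 8$ then necessarily $|D|=|G|$ (otherwise $|G|\ge 2|D|\ge 16$); hence $D=G$ is non-abelian of order at most $15$, fitting clause (c) of Property $(*)$. If instead $|D|\le 7$, then either $D$ is abelian, in which case clause (a) applies for $p>2$ and clause (b) for $p=2$, or $D$ is non-abelian, in which case clause (c) applies again. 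In every sub-case Property $(*)$ is satisfied, so Corollary \ref{cor:relative} gives the global epsilon constant conjecture and Chinburg's $\Omega(2)$-conjecture for every Galois $E/F$ with $\QQ\subseteq F\subseteq E\subseteq L$.

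\emph{Abelian case.} If $G := \Gal(L/\QQ)$ is abelian, then every subfield $E \subseteq L$ is abelian over $\QQ$, and for every Galois $E/F$ with $\QQ\subseteq F\subseteq E\subseteq L$ the group $\Gal(E/F) \le \Gal(E/\QQ)$ is again abelian. Thus $E/F$ falls within the scope of the Breuning--Burns theorem of \cite{BreBur:Dedekind}, which establishes the global epsilon constant conjecture for all abelian extensions of $\QQ$; the $\Omega(2)$-conjecture follows by projection to ${\rm Cl}(\ZZ[\Gal(E/F)])$.

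\emph{Main obstacle.} The whole proof reduces to a single small-group combinatorial observation: a non-abelian group of order at most $15$ cannot contain an abelian subgroup of order at least $8$. This is exactly what avoids the abelian, wildly ramified $2$-adic completions of degree between $8$ and $15$ --- precisely the cases singled out in the remark after Theorem \ref{thm:loceps} as not provable by the local-conjecture machinery of this paper. Beyond that observation, the argument is simply a case split plus invocation of Corollary \ref{cor:relative} and the Breuning--Burns theorem.
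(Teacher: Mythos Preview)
Your argument follows essentially the same route as the paper's: split on whether $\Gal(L/\QQ)$ is abelian, verify Property~$(*)$ in the non-abelian case so that Corollary~\ref{cor:relative} applies, and fall back on Breuning--Burns in the abelian case. The non-abelian analysis is correct and is exactly the contrapositive of the paper's observation that failure of Property~$(*)$ forces $G_w=G$ to be abelian.

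There is one small slip in your abelian case. The Breuning--Burns result proves the global conjecture for abelian extensions \emph{of $\QQ$}; it therefore gives $\EPS(E/\QQ)$ (since $E\subseteq L$ is abelian over $\QQ$), not $\EPS(E/F)$ directly when $F\neq\QQ$. Saying that ``$E/F$ falls within the scope'' of that theorem is not quite right. You still need one appeal to functoriality, Proposition~\ref{prop:epsconj-functorial}(i), to pass from $\EPS(E/\QQ)$ to $\EPS(E/F)$. The paper's own proof leaves this passage equally implicit, so this is a matter of phrasing rather than a genuine gap.
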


The main contents of this article are as follows.
After introducing some notation %in §\ref{sec:prelim}
we will present an efficient algorithm for the
computation of local fundamental classes in \autoref{sec:lfc}.
In \autoref{sec:epsconj} we will recall the formulation of the epsilon constant
conjectures and related known results.
To apply an algorithm of Breuning and the first named  author
for the proof of this conjecture (see \cite{BleyBreu:ExactAlgo})
we will present heuristics to represent local extensions
using global number fields in \autoref{sec:globalrep}.
Thereafter, \autoref{sec:algoproof} gives an overview
of the algorithm and addresses details
and problems which either
needed more efficient solutions or which occurred
during the implementation of the algorithm.
In \autoref{sec:results} we finally
summarize all theoretical results that restrict the problem
to the verification of the local epsilon constant
conjecture for finitely many local extensions of $\QQ_p$.
These problems have then been solved by a computer
and we give some details on the
computations and their results.
Altogether, this will complete the proof of \autoref{thm:loceps}
and its corollaries.
%Corollaries \ref{cor:globeps} and \ref{cor:omega2}.
\bigskip

\noindent
\textbf{Notation:} 
%\section*{Notation}
For a (local or global) number field $L$ we write $\calO_L$ for its ring of integers.
%If $L/K$ is a Galois extension we denote its Galois group by $\G{L/K}$.
If $L$ is a local number field with prime ideal $\frP$, we write $U_L$ for the units $(\calO_L)^\times$
and $U_L^{(n)}$ for the $n$-units $1+\frP^n$.

For a group $G$ and a $G$-module $A$ we write
$H^n(G,A)$ for the cohomology group in degree $n$ as defined in \cite[I§2]{NSW:00}
and we will use the inhomogeneous description using
$n$-cochains $C^n(G,A):=\mathrm{Map}(G^n,A)$ and $C^0(G,A):=A$. As usual, we write $\hat{H}^n(G,A)$
for the Tate cohomology groups.

Let $L/K$ denote a local Galois extension with Galois group $G$.
We will also use the notation of class formations of \cite[XI§2]{Serre}
and let $u_{L/K}$ denote the local fundamental class of $L/K$ as defined in
\cite[XIII§3f.]{Serre}, i.e.\ the element which is mapped to $\frac{1}{[L:K]}+\ZZ$
by the local invariant isomorphism $\inv_{L/K}: \TateH^2(G,L^\times)\simeqarrow \frac{1}{[L:K]}\ZZ / \ZZ$.%
\footnote{Similar definitions can be found in 
\cite[(3.1.3), (7.1.4)]{NSW:00}.}

\medskip

\noindent
\textbf{Acknowledgement:}  It is a pleasure for us to thank the referees for their very careful reading
of the manuscript and many valuable comments. In particular, the formulation of our corollaries is motivated
by a suggestion of one of the referees.

\section{An efficient algorithm to compute the local fundamental class}
\label{sec:lfc}
Throughout this section $L/K$ will denote a Galois extension
of $\QQ_p$ with Galois group $G=\Gal(L/K)$.
Our goal is to find the local fundamental class represented 
as a cocycle in $\TateH^2(G, L^\times)$.

A direct method to compute the image of the local fundamental class
under $\TateH^2(G, L^\times) \rightarrow \TateH^2(G, L^\times/ U_L^{(k)})$
for any $k\ge 0$ has been
described in \cite[§2.4]{BleyBreu:ExactAlgo}.
Let $N$ be an unramified extension of $K$ with cyclic Galois group $C$
and of degree $[N:K]=[L:K]$ and let $\Gamma$ denote the Galois group
of $LN/K$.
Then there is a commutative diagram
\vskip-1em
\begin{equation*}
\begin{tikzpicture}[default]
\matrix (m) [mtrx,sizeL]
{
                           &[1em]  \TateH^2(C, N^\times) \\
\TateH^2(G, L^\times)      & \TateH^2(\Gamma, (LN)^\times) \\
\TateH^2(G, L^\times/U_L^{(k)}) & \TateH^2(\Gamma, (LN)^\times/U_{LN}^{(k)}) \\
};
\path[->,font=\scriptsize]
(m-2-1) edge[right hook->] node[ar]{$\inf$} (m-2-2)
(m-3-1) edge[right hook->] node[ar]{$\inf$} (m-3-2)
%senkrecht
(m-1-2) edge[right hook->] node[r]{$\inf$} (m-2-2)
(m-2-1) edge (m-3-1)
(m-2-2) edge (m-3-2)
;
\end{tikzpicture}
\end{equation*}
in which all inflation maps are injective, either by \cite[Lem.\ 2.5]{BleyBreu:ExactAlgo} or
\cite[VII, \S 6, Prop.~5]{Serre} combined with Hilbert's Theorem 90.
Based on this diagram, the authors describe an algorithm which consists of the following steps:
\begin{enumerate}
\item Find the fundamental class in $\TateH^2(C, N^\times)$.
\item Compute the image under the composition
\[
\TateH^2(C, N^\times)\stackrel{\inf}{\hooklongrightarrow} \TateH^2(\Gamma, (LN)^\times) \rightarrow \TateH^2(\Gamma, (LN)^\times/U_{LN}^{(k)}).
\]
\item Find the preimage under the map
\[
\TateH^2(G, L^\times/U_L^{(k)}) \stackrel\inf\hooklongrightarrow \TateH^2(\Gamma, (LN)^\times/U_{LN}^{(k)}).
\]
\end{enumerate}
If $\varphi$ denotes the Frobenius automorphism in $C=\langle \varphi\rangle$
and $\pi\in K$ is a uniformizing element, the fundamental class in $\TateH^2(C, N^\times)$
is given by (see~\cite[\S 30, Sec.~4 and \S 31, Sec.~4]{Lorenz})
%(see~\cite[§2.4]{BleyBur:Equiv})
\[
\gamma(\varphi^i,\varphi^j) = 
\begin{cases}
1 & \text{if } i+j< [N:K], \\
\pi & \text{if } i+j\ge [N:K]
\end{cases}
\]
Since the groups $(LN)^\times/U_{LN}^{(k)}$ and $L^\times/U_L^{(k)}$ are 
finitely generated, one can compute their cohomology groups
using linear algebra \cite{Holt}.
However, this method turns out to be ineffective even for
local fields of small degree.
\bigskip

The basis of a new algorithm to compute the local fundamental class
is the theory of Serre \cite{Serre}
and especially exercise 2 of chapter XIII, §5.
We recall the results of this exercise and show how
to turn it into an efficient algorithm.

Let $E$ be the maximal unramified subextension of $L/K$ and $d:=[E:K]$.
Denote the maximal unramified extension of $K$ by $\mytilde{K}$
and the Frobenius automorphism of $\mytilde{K}/K$ by $\frob$. Then 
$\Gal(\mytilde{K}/K)=\overline{\langle \frob \rangle}$
%i.e. the topological closure of $\langle\frob\rangle$
%with respect to the Krull topology.
and $\Gal(\mytilde{K}/E)=\overline{\langle\frob^d\rangle}$. 
We set $\mytilde{L} := \mytilde{K}L$ and note that $\mytilde{L}$ is the maximal unramified
extension of $L$. We always identify 
$\Gal(\mytilde{L}/L)$ with $\Gal(\mytilde{K}/E)$ by restriction.

The Galois group of $\mytilde L/K$ is
given by
\[
\Gal(\mytilde L/K) = \{ (\tau, \sigma) \in \Gal(\mytilde K/K) \times G \,\big|\, \sigma|_E=\tau|_E\}.
\]
We consider $L_{nr}:=\mytilde K \otimes_K L$,
for which we have the following representation:
\begin{lemma}
(i) The map
$L_{nr} = \mytilde K \otimes_K L \rightarrow \prod_{i=0}^{d-1} \mytilde L$
defined by sending elements 
$a \otimes b$ to $\left( a b, \frob(a)b,\ldots, \frob^{d-1}(a)b \right)$
is an isomorphism.

(ii) The Galois action of $\langle\frob\rangle\times G$
 on elements $y=(y_0,y_1,\ldots,y_{d-1})\in \prod_{i=0}^{d-1} \mytilde L$ induced by
this isomorphism is uniquely described by
\begin{align*}
(\frob, 1)(y) &= (y_1,y_2,\ldots,\frob^d(y_0)),\\
(\frob^j, \sigma)(y) &= (\hat\sigma(y_0),\hat\sigma(y_1),\ldots,\hat\sigma(y_{d-1})), \text{if } 
\sigma|_E = \frob^j |_E.
\end{align*}
Here $\hat{\sigma} \in \Gal(\mytilde{L}/K)$ is the unique element such that 
$\hat{\sigma}|_{\wtilde K} = \frob^j \text{ and } \hat\sigma|_L = \sigma$.

\end{lemma}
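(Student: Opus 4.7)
For (i) I would exploit the unique factorization $K \subseteq E \subseteq L$, where $E/K$ is unramified of degree $d$ and $L/E$ is totally ramified. Writing
\[
\mytilde{K} \otimes_K L \;\cong\; \mytilde{K} \otimes_E (E \otimes_K L),
\]
the standard étale-algebra isomorphism
\[
E \otimes_K L \;\simeqlongarrow\; \prod_{\sigma \in \Gal(E/K)} L, \qquad e \otimes \ell \mapsto (\sigma(e)\ell)_\sigma
\]
reduces the problem to identifying $\mytilde{K} \otimes_{E,\sigma} L$ with $\mytilde{L}$ for each $\sigma$, which follows from the linear disjointness of the unramified extension $\mytilde{K}/E$ and the totally ramified extension $L/E$. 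Enumerating $\Gal(E/K) = \langle \varphi|_E \rangle$ by $\sigma_i = \varphi^i|_E$ for $i = 0, \ldots, d-1$, the composite isomorphism then takes the explicit form $a \otimes b \mapsto (\varphi^i(a) b)_i$, as claimed. If one prefers a hands-on verification, the map is manifestly $\mytilde{K}$-linear between free $\mytilde{K}$-modules of the same rank $[L:K]$ (the right-hand side has rank $d \cdot [\mytilde{L}:\mytilde{K}] = d[L:E] = [L:K]$), and injectivity can be checked after restricting to a finite unramified extension $K_m \supseteq E$ inside $\mytilde{K}$.

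For (ii) I would directly verify the formulas on simple tensors $a \otimes b$, using the identification $\Gal(\mytilde{L}/L) \cong \Gal(\mytilde{K}/E)$ by restriction: under this identification the Frobenius of $\mytilde{L}/L$ is the unique element of $\Gal(\mytilde{L}/K)$ that restricts to $\varphi^d$ on $\mytilde{K}$ and to the identity on $L$, which we continue to denote $\varphi^d$. For $(\varphi,1)$ the image of $\varphi(a) \otimes b$ under the isomorphism of (i) is $(\varphi^{i+1}(a) b)_{i=0}^{d-1} = (y_1, \ldots, y_{d-1}, \varphi^d(a) b)$, and the last entry equals $\varphi^d(y_0)$ because $\varphi^d$ fixes $b \in L$. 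For $(\varphi^j, \sigma)$ with $\sigma|_E = \varphi^j|_E$ the image of $\varphi^j(a) \otimes \sigma(b)$ is $(\varphi^{i+j}(a) \sigma(b))_i$, which matches $\hat\sigma(y_i) = \hat\sigma(\varphi^i(a)) \hat\sigma(b) = \varphi^{i+j}(a) \sigma(b)$ componentwise, thanks to the defining properties of $\hat\sigma$.

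The main difficulty is really bookkeeping rather than any deep ingredient: one must fix the enumeration of $\Gal(E/K)$ so that the displayed formula in (i) comes out in the stated form, and one must keep track of which incarnation of $\varphi^d$ is meant at each step (the Frobenius of $\mytilde{K}/K$, the induced Frobenius of $\mytilde{L}/L$, or the coordinate-wise action on the product $\prod \mytilde{L}$). Beyond that, only the linear disjointness of unramified and totally ramified local extensions, together with the étale-algebra decomposition of $E \otimes_K L$, is used.
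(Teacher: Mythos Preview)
Your proposal is correct and is precisely the ``direct computation'' the paper alludes to (the paper gives no further detail beyond citing Serre's exercise); your structural decomposition through $E$ for part~(i) is a clean way to see why the map is an isomorphism, and your verification on simple tensors for part~(ii) is exactly the intended direct check.
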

\begin{proof}
Direct computation, cf.~\cite[XIII §5, Ex.\ 2]{Serre}.
\end{proof}

\begin{remark}
For arbitrary $(\frob^s, \sigma) \in \langle\frob\rangle\times G$ one chooses $j \in \ZZ$
such that $\sigma|_E = \frob^j|_E$. Then $(\frob^s, \sigma) =
(\frob^{s-j}, 1) (\frob^j, \sigma)$ and and the action of each of the factors is given by the lemma.
Explicitly, there is a unique $\hat\sigma \in \Gal(\mytilde{L}/ K)$
such that $\what\sigma|_L = \sigma$, $\what\sigma|_{\tilde K} = \frob^j$ and
$(\frob^s, \sigma)$ acts as $(\frob^{s-j}, 1) \what\sigma$
(with $\what\sigma$ acting diagonally).
\end{remark}

Let $\what L$ be the completion of the maximal unramified extension $\mytilde L$ of $L$.
%We know that for every $c\in \what L^\times$
%with $v(c)=1$
%there exists $x\in \what L^\times$ such that $x^{\frob^d-1}=c$.

\begin{lemma}
\label{lem:Neukirch}
For every $c\in U_{\what L}$ %\what L^\times$ with $v(c)=0$
there exists $x\in U_{\what L}$ such that $x^{\frob^d-1}=c$.
\end{lemma}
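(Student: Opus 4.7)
The plan is to solve $x^{\varphi^d-1} = c$, i.e.\ $\varphi^d(x) = cx$, by successive approximation modulo higher and higher powers of the maximal ideal, then invoke completeness of $\what{L}$.

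First I would handle the residue field level. Let $\pi$ be a uniformizer of $L$ (which is also a uniformizer of $\mytilde L$ and $\what L$, since $\mytilde L/L$ is unramified), and let $\bar k$ denote the residue field of $\what L$. Since $\mytilde L$ is the maximal unramified extension of $L$, $\bar k$ is an algebraic closure of the residue field of $L$. The automorphism $\frob^d$ acts on $\bar k$ as $y\mapsto y^{q_E}$, where $q_E$ is the cardinality of the residue field of $E$. Writing $\bar c\in \bar k^\times$ for the reduction of $c$, I pick $\bar x_0\in\bar k^\times$ with $\bar x_0^{q_E-1}=\bar c$; this is possible because $\bar k$ is algebraically closed. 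Lifting $\bar x_0$ to $x_0\in U_{\what L}$, the element $c_1 := c\cdot x_0^{-(\varphi^d-1)}$ lies in $U_{\what L}^{(1)}$.

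Next I would iterate on the principal units. Suppose inductively that $c_n\in U_{\what L}^{(n)}$, so $c_n = 1+\pi^n a$ with $a\in\calO_{\what L}$. I want $x_n\in U_{\what L}^{(n)}$ such that $c_n\cdot x_n^{-(\varphi^d-1)}\in U_{\what L}^{(n+1)}$. Writing $x_n = 1+\pi^n b$, a short expansion gives
\[
\frac{\varphi^d(x_n)}{x_n} \equiv 1 + \pi^n\bigl(\varphi^d(b)-b\bigr)\pmod{\pi^{n+1}},
\]
so it suffices to find $b\in\calO_{\what L}$ with $\varphi^d(b)-b\equiv a\pmod{\pi}$. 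In the residue field this is the Artin–Schreier type equation $T^{q_E}-T = \bar a$, which always has a solution in the algebraically closed $\bar k$. Lift any such solution to $b\in\calO_{\what L}$, define $x_n$, and set $c_{n+1} := c_n\cdot x_n^{-(\varphi^d-1)}\in U_{\what L}^{(n+1)}$.

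Finally, because $x_n\in U_{\what L}^{(n)}$ the infinite product $x := \prod_{n\ge 0} x_n$ converges in $U_{\what L}$ (using that $\what L$ is complete and that $U_{\what L}^{(n)}\to 1$). By construction $c\cdot x^{-(\varphi^d-1)}\in \bigcap_n U_{\what L}^{(n)} = \{1\}$, hence $x^{\varphi^d-1}=c$. The only delicate point is the induction step: one must be sure that $\varphi^d$ really acts on $\bar k$ by $q_E$-th power (so that the residual equations $T^{q_E-1}=\bar c$ and $T^{q_E}-T=\bar a$ are both solvable in $\bar k$) and that $\pi$ remains a uniformizer of $\what L$; these are both consequences of $\mytilde L/L$ being unramified with residue extension $\bar k/k_L$ having Frobenius $\frob^d$.
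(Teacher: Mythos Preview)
Your proof is correct and follows essentially the same approach as the paper's: both argue by successive approximation, first solving $\bar x^{q-1}=\bar c$ in the algebraically closed residue field, then at each higher level solving an Artin--Schreier equation $T^{q}-T=\bar a$ (using that $\frob^d$ fixes the uniformizer $\pi\in L$), and finally passing to the limit by completeness. The paper simply cites Neukirch and sketches the same iteration with slightly different indexing.
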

\begin{proof}
This is \cite[V, Lem.\ 2.1]{Neu:AlgZThEn} or \cite[XIII, Prop.\ 15]{Serre}
applied to the totally ramified extension
$L/E$ with $\frob^d$ generating $\Gal(\mytilde K/E)$.
Since this will be an essential part of the algorithm,
we sketch the constructive proof of~\cite{Neu:AlgZThEn}.

Denote the residue class field of $\what L$ by $\kappa$,
the cardinality of the residue class field of $E$ by $q$
and let $\phi=\frob^d$. Let $\pi$ be a uniformizing element of $L$.

Since $\kappa$ is algebraically closed, one finds a solution
to $x^\phi=x^q=xc$ in $\kappa$ and one can write
$c=x_1^{\phi-1}a_1$ with $x_1\in U_{\what L}$ and $a_1\in U_{\what L}^{(1)}$.
Similarly, one finds $x_2\in U_{\what L}^{(1)}$ and $a_2\in U_{\what L}^{(2)}$
such that $a_1=x_2^{\phi-1}a_2$. 
Indeed, if we set $a_1 := 1+b_1\pi, \ x_2 := 1+y_2\pi$, then we need to solve 
\[
a_1 x_2^{1-\phi} \equiv 1 - (y_2^\phi - y_2 -b_1)\pi\  \equiv 1 \ (\mathrm{mod } \pi^2),
\]
i.e., we must solve the equation $y_2^q - y_2 - b_1 = 0$ in $\kappa$.

Proceeding this way one has
\[
c= (x_1x_2\cdots x_n)^{\phi-1} a_n,\quad 
x_1\in U_{\what L},\;x_i\in U_{\what L}^{(i-1)},\; a_n\in U_{\what L}^{(n)}
\]
and passing to the limit solves the equation in $U_{\what L}$.
\end{proof}

This fact can be generalized to our case. Let $\what{L}_{nr}$ be the %(komponentenweise)
completion of $L_{nr}$, so $\what{L}_{nr} \simeq \prod_{i=0}^{d-1} \what L$, and
$w:\what{L}_{nr}\rightarrow\ZZ$ the sum of the valuations.

\begin{lemma}
\label{lem:NeukirchLnr}
For every $c\in \what{L}_{nr}^\times$ with $w(c)=0$
there exists $x\in\what{L}_{nr}^\times$ such that $x^{\frob-1}=c$.
\end{lemma}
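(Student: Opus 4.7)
The plan is to reduce the problem on the product ring $\what L_{nr} \simeq \prod_{i=0}^{d-1} \what L$ to the scalar statement of Lemma \ref{lem:Neukirch} by unwinding the explicit description of the Frobenius action.

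First I would write $c = (c_0, c_1, \ldots, c_{d-1})$ and look for $x = (x_0, x_1, \ldots, x_{d-1}) \in \what L_{nr}^\times$ satisfying $\frob(x) = c \cdot x$. Using the formula
\[
(\frob, 1)(x) = (x_1, x_2, \ldots, x_{d-1}, \frob^d(x_0))
\]
from the preceding lemma, the equation $x^{\frob - 1} = c$ is equivalent to the system
\begin{align*}
x_{i+1} &= c_i\, x_i \qquad (0 \le i \le d-2), \\
\frob^d(x_0) &= c_{d-1}\, x_{d-1}.
\end{align*}
The first $d-1$ equations recursively determine $x_1, \ldots, x_{d-1}$ in terms of a free choice of $x_0$ by $x_i = c_{i-1} c_{i-2} \cdots c_0\, x_0$. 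Substituting into the last equation, the system reduces to the single scalar equation
\[
x_0^{\frob^d - 1} = \prod_{i=0}^{d-1} c_i =: C
\]
in $\what L^\times$.

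Next I would observe that the hypothesis $w(c) = 0$ is exactly $\sum_{i=0}^{d-1} v(c_i) = 0$, where $v$ is the normalised valuation on $\what L$. Hence $v(C) = 0$, i.e.\ $C \in U_{\what L}$. Lemma \ref{lem:Neukirch} then produces $x_0 \in U_{\what L}$ with $x_0^{\frob^d - 1} = C$. Defining $x_1, \ldots, x_{d-1}$ by the recursion above gives a tuple $x \in \what L_{nr}^\times$ (each component is a product of units of $\what L$ by the nonzero element $x_0$, hence nonzero) satisfying $x^{\frob - 1} = c$.

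No step here is a serious obstacle: the only genuine input is the lifting lemma from the classical totally ramified case, and the argument essentially amounts to telescoping. The one place to be careful is the reduction to that lemma, where the normalisation $w(c) = 0$ is precisely what is needed to ensure that $C$ lies in $U_{\what L}$ (so that Lemma \ref{lem:Neukirch} applies, and not merely in $\what L^\times$ where no such $x_0$ need exist).
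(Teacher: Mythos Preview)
Your proof is correct and is essentially identical to the paper's argument: both reduce to the single scalar equation $x_0^{\frob^d-1}=\prod_{i=0}^{d-1}c_i$ in $\what L^\times$ via the explicit Frobenius action on the product, and then invoke Lemma~\ref{lem:Neukirch}. The only cosmetic difference is that the paper writes down the solution $x=(y,\,yc_0,\,yc_0c_1,\ldots)$ and verifies, whereas you derive it by solving the linear recursion; the content is the same.
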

\begin{proof}
If $c=(c_0,\ldots c_{d-1})\in\prod_{i=0}^{d-1} \what L^\times$ and $w(c)=0$,
then $\prod_{i=0}^{d-1} c_i \in \what L^\times$ has valuation 0 and there
exists $y\in \what L^\times$ for which $y^{\frob^d-1}=\prod c_i$ by
%\cite[V, Lem.\ 2.1]{Neu:AlgZThEn}.
\autoref{lem:Neukirch}.
Then the element $x=(y,y c_0,y c_0 c_1,\ldots, y c_0\cdots c_{d-2})$
satisfies
\[
x^{\frob-1}
=\frac{(y c_0,y c_0 c_1,\ldots, y c_0\cdots c_{d-2},\frob^d(y))}{(y,y c_0,y c_0 c_1,\ldots, y c_0\cdots c_{d-2})}
=(c_0,c_1,\ldots,c_{d-1})=c
\]
since $\frob^d(y)=y \prod_{i=0}^{d-1} c_i$.
\end{proof}

We prepare our main result by the following lemma.
\begin{lemma}\label{lem:ses-lemma}
\begin{enumerate}[(i)]
\item $\ker(w)=\{y^{\frob-1}\mid y\in \what{L}_{nr}^\times\}$,
\item $\ker(\frob-1)=L^\times$, $L^\times$ being diagonally embedded in $\what{L}_{nr}^\times$, and
\item $\what L_{nr}^\times$ is a cohomologically trivial $G$-module.
\end{enumerate}
\end{lemma}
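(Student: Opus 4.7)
For part (i), the inclusion $\{y^{\frob-1}\} \subseteq \ker(w)$ follows directly from the explicit description of the $\frob$-action given in the earlier lemma: $\frob$ cyclically permutes the components of $y$ (with a $\frob^d$-twist in the last slot), and since $\frob^d$ preserves each component's valuation, $w$ is $\frob$-invariant. For the reverse inclusion, the plan is to reduce to the unit case treated in \autoref{lem:NeukirchLnr}. Given $c = (c_0, \ldots, c_{d-1})$ with $\sum v(c_i) = 0$, I would fix a uniformizer $\pi_L$ of $L$ (which is $\frob$-fixed, since $\frob$ restricts to the identity on $L$) and set $b_0 = 0$, $b_{i+1} = b_i + v(c_i)$. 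The constraint $\sum v(c_i) = 0$ makes the cyclic equation $b_0 - b_{d-1} = v(c_{d-1})$ close up, so $y := (\pi_L^{b_0}, \ldots, \pi_L^{b_{d-1}})$ has $y^{\frob-1}$ with the same component valuations as $c$. Then $c/y^{\frob-1}$ is a unit in $\what L_{nr}^\times$, and \autoref{lem:NeukirchLnr} produces $z$ with $z^{\frob-1} = c/y^{\frob-1}$, giving $c = (yz)^{\frob-1}$.

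Part (ii) is immediate from the same action formula: $\frob y = y$ forces $y_0 = \cdots = y_{d-1} =: a$ and $\frob^d(a) = a$. Since $\overline{\langle \frob^d\rangle}$ topologically generates $\Gal(\what L / L)$, the fixed field of $\frob^d$ in $\what L$ is $L$, so $a \in L^\times$ and $y$ is the diagonal image of $a$.

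Part (iii) is the main task. My plan is to identify $\what L_{nr}^\times$ with $\mathrm{Ind}_H^G \what L^\times$ as $G$-modules, where $H = \Gal(L/E)$ is the inertia subgroup of $G$. On the algebraic level, $G$ acts on the primitive idempotents of $L_{nr} = \mytilde K \otimes_K L$ in a single orbit of size $d$ with stabilizer $H$ (since the $G/H$-action on $E \otimes_K \mytilde K$ permutes the $[E:K]$ factors transitively), and this gives the induced-module structure; moreover the residual $H$-action on the zeroth component $\what L^\times$ coincides with the natural action of $H \cong \Gal(\what L / \what E)$, where $\what E$ denotes the completion of the maximal unramified extension of $E$, so that $\what L / \what E$ is totally ramified of degree $|H|$. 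Once this identification is in place, Shapiro's lemma combined with Mackey's formula for restriction to arbitrary subgroups $U \le G$ reduces cohomological triviality of $\what L_{nr}^\times$ over $G$ to cohomological triviality of $\what L^\times$ over $H$. The latter is classical: since $\what E$ has algebraically closed residue field, $\mathrm{Br}(\what E) = 0$, and combined with Hilbert~90 and a dimension-shifting argument this yields $\hat H^n(H, \what L^\times) = 0$ for all $n \in \ZZ$ (see Serre, \emph{Local Fields}, Ch.~X--XIII). The main obstacle is the careful bookkeeping needed to verify the induced-module identification with the correct $H$-action; once that is established, the reduction to the classical cohomological vanishing is routine.
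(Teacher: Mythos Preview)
Your proposal is essentially correct, and since the paper itself gives no argument beyond citing Serre's exercise and the second author's dissertation, you are supplying what the paper omits. Two small points are worth tightening. For (i), the reduction to units is superfluous: \autoref{lem:NeukirchLnr} already establishes the full inclusion $\ker(w)\subseteq\{y^{\frob-1}\}$ (its hypothesis is only $w(c)=0$, not that each component of $c$ is a unit), so (i) follows immediately from that lemma together with the $\frob$-invariance of $w$. For (iii), your reduction via $\what L_{nr}^\times\cong\mathrm{Ind}_H^G\what L^\times$ and Mackey/Shapiro is clean and correct (and $H\trianglelefteq G$ makes the double-coset bookkeeping trivial), but the last step is not dimension shifting. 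What you actually invoke is the cohomological-triviality criterion (Serre, \emph{Local Fields}, Ch.~IX, \S6): it suffices that $\hat H^1(H_p,\what L^\times)=0$ and $\hat H^2(H_p,\what L^\times)=0$ for each Sylow subgroup $H_p\le H$. Hilbert~90 handles $\hat H^1$, and $\hat H^2(H_p,\what L^\times)=\mathrm{Br}(\what L/\what L^{H_p})\subseteq\mathrm{Br}(\what L^{H_p})=0$ because every intermediate field $\what L^{H_p}$---not just $\what E$---is complete with algebraically closed residue field, hence $C_1$ by Lang's theorem. You should state this for all intermediate fields rather than only for $\what E$.
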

\begin{proof}
This is \cite[XIII §5, Ex.\ 2(a)]{Serre}. For a detailed proof see \cite[Lemma~2.13]{MyPhD}.
\end{proof}

We denote $V:=\ker(w)$ and from \autoref{lem:ses-lemma} we get the exact sequences
\begin{alignat}{5}
\label{eq:VLZ}
0 &\longrightarrow \;V &&\longrightarrow \,\what L_{nr}^\times
\stackrel{w}\loongrightarrow\ZZ &&\longrightarrow 0 \\
\label{eq:LLV}
\text{and}\quad
0 &\longrightarrow L^\times &&\longrightarrow \,\what L_{nr}^\times
\stackrel{\frob-1}\loongrightarrow V && \longrightarrow 0.
\end{alignat}
%$0\rightarrow V \rightarrow \what L_{nr}^\times \stackrel{w}\longrightarrow\ZZ\rightarrow 0$
%and $0\rightarrow L^\times \rightarrow \what L_{nr}^\times \stackrel{\frob-1}\longrightarrow V\rightarrow 0$.
Since $\what{L}_{nr}^\times$ is cohomologically trivial,
the connecting homomorphisms of their long exact cohomology sequences
provide isomorphisms
$\delta_1: \TateH^0(G,\ZZ)\rightarrow \TateH^1(G,V)$,
$\delta_2: \TateH^1(G,V)\rightarrow \TateH^2(G,L^\times)$
and we consider the composition
\begin{equation}
\label{eq:Phi-connecting-hom}
\Phi_{L/K}: \TateH^0(G,\ZZ)\simeqlongarrow \TateH^2(G,L^\times).
\end{equation}
Its inverse $\Phi_{L/K}^{-1}$ induces an isomorphism
\[
\overline\inv_{L/K}: \TateH^2(G,L^\times)
\simeq \TateH^0(G,\ZZ) 
\stackrel{\!\cdot\frac{1}{[L:K]}}{\looongrightarrow} \textstyle\frac{1}{[L:K]}\ZZ / \ZZ.
\]

\begin{proposition}
\label{prop:inverse}
\begin{enumerate}[(i)]
\item The map $\overline\inv$ is an invariant map in the sense of \cite[XI, \S 2]{Serre}. Therefore 
the elements $\overline{u}_{L/K}:=\Phi_{L/K}(1+[L:K]\ZZ)$
are fundamental classes with respect to the class formation associated to $\overline{\inv}$.
%i.e. $\overline\inv_{L/K}(\overline{u}_{L/K})=\frac{1}{[L:K]}+\ZZ$.
\item The element $\overline{u}_{L/K}$ is the inverse of the local fundamental class $u_{L/K}$.
\end{enumerate}
\end{proposition}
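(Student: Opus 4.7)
The plan is to verify part (i) by checking the two compatibility axioms for an invariant map on a class formation in the sense of Serre [XI, §2], namely compatibility with inflation along towers of Galois extensions and compatibility with restriction (up to the normalizing factor $[L:K]$). The two short exact sequences \eqref{eq:VLZ} and \eqref{eq:LLV} are functorial under passage to intermediate fields: for any Galois subextension $L'/K$ with $L' \subseteq L$, the modules $V$, $\what{L}_{nr}^\times$, $\ZZ$ fit into natural morphisms of short exact sequences compatible with the inclusions $L'^\times \hookrightarrow L^\times$ and with change of decomposition subgroup. The naturality of the connecting homomorphisms that define $\Phi_{L/K}$ in \eqref{eq:Phi-connecting-hom} then transports to compatibility statements for $\Phi_{L/K}^{-1}$, and the normalization by $1/[L:K]$ in the definition of $\overline{\inv}_{L/K}$ is exactly what is needed to match the $[L:K]$-factor in the restriction axiom. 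Once the axioms are verified, the second assertion in (i) is immediate, since by construction $\overline{\inv}_{L/K}(\overline{u}_{L/K}) = \frac{1}{[L:K]}\Phi_{L/K}^{-1}(\overline{u}_{L/K}) = \frac{1}{[L:K]} + \ZZ$.

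For part (ii), by part (i) both $\inv_{L/K}$ and $\overline{\inv}_{L/K}$ are invariant maps for the same class formation $L \mapsto L^\times$. By the uniqueness of an invariant map up to an automorphism of $\what{\ZZ}$ (Serre [XI, §2, Prop.~2]), there is a unit $u \in \what{\ZZ}^\times$ with $\overline{\inv} = u \cdot \inv$. To pin down $u$ I would carry out one explicit computation in the case of an unramified cyclic extension $L/K$ of degree $d$ with Frobenius $\frob$. Here the classical fundamental class is given by the cocycle $\gamma(\frob^i,\frob^j) = \pi^{\lfloor(i+j)/d\rfloor}$, while $\overline{u}_{L/K} = \Phi_{L/K}(1+d\ZZ)$ can be computed directly via both connecting homomorphisms: lift $1 \in \ZZ$ to an element $\pi_0 \in \what{L}_{nr}^\times$ with $w(\pi_0)=1$, obtain a $1$-cocycle $\sigma \mapsto \sigma(\pi_0)\pi_0^{-1}$ in $V$, then apply \autoref{lem:NeukirchLnr} to find $x_\sigma \in \what{L}_{nr}^\times$ with $x_\sigma^{\frob-1} = \sigma(\pi_0)\pi_0^{-1}$, and finally assemble the $L^\times$-valued $2$-cocycle $(\sigma,\tau) \mapsto x_\sigma\,\sigma(x_\tau)\,x_{\sigma\tau}^{-1}$. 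Comparing with the explicit formula for $u_{L/K}$ yields $\overline{u}_{L/K} = u_{L/K}^{-1}$, which forces $u = -1$ and hence proves the claim in general.

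The main obstacle will be the careful bookkeeping in part (i), in particular the restriction compatibility: passing from $G = \Gal(L/K)$ to a subgroup $H$ of $G$ keeps the ambient module $\what{L}_{nr}^\times$ and the sum-of-valuations map $w$ unchanged, so one must verify that the different normalization $1/[L:L^H]$ versus $1/[L:K]$ in $\overline{\inv}_{L/L^H}$ and $\overline{\inv}_{L/K}$ interacts correctly with the connecting homomorphisms to reproduce the prescribed index factor $[L^H:K]$. The sign computation in (ii) is, by comparison, a routine but careful cocycle calculation using the explicit construction in \autoref{lem:NeukirchLnr}.
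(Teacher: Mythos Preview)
Your proposal is correct and follows the paper's approach: verify the class-formation axioms for (i), then for (ii) reduce to the unramified cyclic case and carry out the explicit two-step connecting-homomorphism computation there. The paper packages the reduction slightly differently---it inflates both $u_{L/K}$ and $\overline{u}_{L/K}$ to $H^2(\Gal(LN/K),(LN)^\times)$ and uses injectivity of inflation rather than your appeal to uniqueness of the invariant map up to a unit of $\hat\ZZ$---and it also records the general cocycle formulas \eqref{eq:defBeta}, \eqref{eq:defGamma} en route (since those feed \autoref{alg:lfc}), but the substance of the argument is the same.
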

\begin{proof}
This is \cite[XIII §5, Ex.\ 2(c) and (d)]{Serre}. For a detailed proof we refer the reader 
to  \cite[Prop.~2.14]{MyPhD}. Since parts of the proof will
be turned into an algorithm (see Remark \ref{remark lfc}) we recall some of the details.

Part (i) can be proved by verifying the axioms of a class formation.
%Then two elements $\overline{u}_{L/K}$ and $\overline{u}_{L'/K}$ with $[L':K]=[L:K]$
%have the same invariant $\overline\inv_{L/K}(\overline{u}_{L/K})=\overline\inv_{L'/K}(\overline{u}_{L'/K})$
%and it is sufficient to prove (ii) for unramified extensions.

In order to prove (ii) it suffices to show $\overline{u}_{N/K} = u_{N/K}^{-1}$ where $N/K$ denotes
the unramified extension of $K$ of degree $[L:K]$. Indeed, by the axioms of a class formation
we have
\[
\inv_{L/K} = \inv_{LN/K} \circ \mathrm{inf}_{L/K}^{LN/K}, \quad
\inv_{N/K} = \inv_{LN/K} \circ \mathrm{inf}_{N/K}^{LN/K},
\]
and the same identities with $\inv$ replaced by $\overline\inv$.
It follows that $\mathrm{inf}_{L/K}^{LN/K}(u_{L/K}) = \mathrm{inf}_{N/K}^{LN/K}(u_{N/K})$ and
$\mathrm{inf}_{L/K}^{LN/K}(\overline{u}_{L/K}) = \mathrm{inf}_{N/K}^{LN/K}(\overline{u}_{N/K})$.
Since $\mathrm{inf}_{L/K}^{LN/K}$ is injective we deduce from $\overline{u}_{N/K} = u_{N/K}^{-1}$
the desired equality  $\overline{u}_{L/K} = u_{L/K}^{-1}$.

For the unramified case one can
make a direct computation of $\Phi_{L/K}(1+[L:K]\ZZ)$
by applying the connecting homomorphisms $\delta_1$ and $\delta_2$
as follows.
%in which the connection homomorphism $\delta_1$ arises from
For $\delta_1$ we consider the commutative diagram
\begin{equation}
\label{diag:delta1}
\begin{tikzpicture}[default]
\matrix (m) [mtrx,sizeL]
{
  &[-1em]    & \what L_{nr}^\times        & \!\ZZ &[-1em] \\[-2.2em]
  &          & \rotatebox{90}{$=$}        & \rotatebox{90}{$=$} \\[-2.2em]
0 & C^0(G,V) & C^0(G,\what L_{nr}^\times) & C^0(G,\ZZ) & 0 \\
0 & C^1(G,V) & C^1(G,\what L_{nr}^\times) & C^1(G,\ZZ) & 0 \\
};
\path[->,font=\scriptsize]
(m-3-1) edge (m-3-2)
(m-3-2) edge (m-3-3)
(m-3-3) edge node[arr]{$w$} (m-3-4)
(m-3-4) edge (m-3-5)
(m-4-1) edge (m-4-2)
(m-4-2) edge (m-4-3)
(m-4-3) edge node[arr]{$w^*$} (m-4-4)
(m-4-4) edge (m-4-5)
(m-3-2) edge (m-4-2)
(m-3-3) edge node[r]{$\partial_1$} (m-4-3)
(m-3-4) edge (m-4-4)
;
\end{tikzpicture}
\end{equation}
which is induced by the exact sequence \eqref{eq:VLZ}, and where 
$w^*$ is the map on the group of cochains induced by $w$.
If $\pi$ is any uniformizing element of $\what L^\times$,
% (e.g.\ a uniformizing element of $L$),
the element $a=(1,\ldots,1,\pi)\in \what L_{nr}^\times=C^0(G,\what L_{nr}^\times)$
is a preimage of $1$ via $w$.
Applying $\partial_1$ yields
$\alpha\in C^1(G,\what L_{nr}^\times)$, which is defined by
\begin{equation*}
\alpha(\sigma):= %(\partial_1 a)(\sigma) = 
\frac{\sigma(a)}{a} = 
\begin{cases}
\Big(1,\ldots,1, \frac{\hat\sigma(\pi)}{\pi}\Big), & \text{if } \hat\sigma|_{E}=1 \\[.5em]
\Big(1,\ldots,1,\hat\sigma(\pi),\underbrace{1,\ldots, 1, \textstyle\frac{1}{\pi}\!\!}_{j\text{ components}}\Big), &
\text{if } \hat\sigma|_{E}=\varphi^{-j}, 1\le j\le d-1
\end{cases}
\end{equation*}
The commutativity of the diagram then implies $\alpha\in C^1(G,V)$.

For the connecting homomorphism $\delta_2$ we consider the commutative diagram
\begin{equation}
\label{diag:delta2}
\begin{tikzpicture}[default]
\matrix (m) [mtrx,sizeL]
{
0 &[-1em] C^1(G,L^\times) & C^1(G,\what L_{nr}^\times) & C^1(G,V) &[-1em] 0 \\
0 & C^2(G,L^\times) & C^2(G,\what L_{nr}^\times) & C^2(G,V) & 0 \\
};
\path[->,font=\scriptsize]
(m-1-1) edge (m-1-2)
(m-1-2) edge (m-1-3)
(m-1-3) edge node[ar]{$\varphi-1$} (m-1-4)
(m-1-4) edge (m-1-5)
(m-2-1) edge (m-2-2)
(m-2-2) edge (m-2-3)
(m-2-3) edge (m-2-4)
(m-2-4) edge (m-2-5)
(m-1-2) edge (m-2-2)
(m-1-3) edge node[r]{$\partial_2$} (m-2-3)
(m-1-4) edge (m-2-4)
;
\end{tikzpicture}
\end{equation}
which arises from the exact sequence \eqref{eq:LLV}.
To find a preimage of $\alpha$ via $\frob-1$,
we need elements in $\what L_{nr}^\times$ which are
mapped to $\frac{\sigma(a)}{a}$ by $\varphi-1$.
By \autoref{lem:NeukirchLnr} these preimages are given by
\begin{equation}
\label{eq:defBeta}
\beta(\sigma) := \begin{cases}
\left(u_\sigma,\ldots,u_\sigma\right) & \text{if } \hat\sigma|_{E}=1 \\
(
u_\sigma,\ldots,u_\sigma,
\underbrace{
u_\sigma \hat\sigma(\pi),\ldots,u_\sigma \hat\sigma(\pi)
}_{j \text{ components}}
) & \text{if } \hat\sigma|_{E}=\frob^{-j}, 1\le j\le d-1
\end{cases}
\end{equation}
where $u_\sigma$ solves $u_\sigma^{\varphi^d-1}=\frac{\hat\sigma(\pi)}{\pi}$.
The commutativity of the diagram again implies that the cocycle
\begin{equation}
\label{eq:defGamma}
\gamma(\sigma,\tau):=(\partial_2 \beta)(\sigma, \tau) = \frac{\sigma(\beta(\tau)) \beta(\sigma)}{\beta(\sigma\tau)}
\end{equation}
has values in $L^\times$ and we obtain $\bar u_{L/K}=\Phi_{L/K}(1+[L:K]\ZZ)=\gamma\in \TateH^2(G,L^\times)$.

If $L/K$ is unramified, one can choose $\pi$ to be a uniformizing element of $K$
and set $\sigma=\frob^i$, $\tau=\frob^j$.
Then $\frac{\hat\sigma(\pi)}{\pi}=1$ for all $\hat\sigma\in \Gal(\mytilde L/K)$
and every $u_\sigma\in L^\times$ solves $u_\sigma^{\frob^n-1}=\frac{\hat\sigma(\pi)}{\pi}$.
If one chooses $u_\sigma=\frac1\pi$ for $\sigma\neq id$ and $u_{id}=1$,
one can easily check that
%Then $\hat\sigma(\pi)=\pi$, $u_\sigma=1$ for all $\sigma$,
%and it is easy to see that
$\overline{u}_{L/K}(\sigma,\tau)=1$, if $i+j<d$ and 
$\overline{u}_{L/K}(\sigma,\tau)=\pi^{-1}$ otherwise.
Hence, $\overline{u}_{L/K}$ is the inverse of the local fundamental class,
cf.~\cite[§2.4]{BleyBreu:ExactAlgo}.
A detailed proof can be found in the second author's dissertation~\cite{MyPhD}.
\end{proof}

% In general, the explicit computation of the connecting
% homomorphisms gives an algorithm
% for the computation of the local fundamental class.

%Notation $C(G,A)$, $U_L$, $U_L^{(n)}$ ...

\begin{remark}\label{remark lfc}
From the construction above one directly obtains an algorithm.
The uniformizing element $\pi$ of $\what L^\times$ in the proof above
can be chosen to be a uniformizing element of $L$.
Then approximations to the elements $u_\sigma$ can be computed by successively
applying the constructive steps of the proof of
\autoref{lem:Neukirch}.
This involves solving equations in the algebraically closed
residue class field of $\what L^\times$.
However, we cannot do computations in $\what L^\times$ directly,
but rather work in an appropriate subfield,
starting with $L$. Whenever we cannot solve one
of these equations in the residue class field of $L$,
we generate an appropriate algebraic extension
and work there from then on.
In worst case, this means that we have to generate
an algebraic extension in every step.
And, hence, the extensions
involved in the computations often get very large.
\end{remark}

To avoid this problem we proceed as follows. Let $\pi_K$ and $\pi_L$ be uniformizing elements of $K$ and $L$,
$e$ the ramification degree and $d$ the inertia degree of $L/K$.
Let $N$ be the unramified extension of $K$ of degree $[L:K]$. Then $F := LN$ is the
unramified extension of $L$ of degree $e$. We set $L_{nr}:=\prod_d F$
and let $E$ be the maximal unramified extension of $K$ in $L$
with Frobenius automorphism $\varphi$.
In the algorithm below, % with large algebraic extensions,
we construct a special uniformizing element $\pi$
in $F$ 
such that $N_{F/L}(\frac{\hat\sigma(\pi)}{\pi})=1$.
One can then prove that the elements $u_\sigma$ can be
constructed in~$F$.

\noindent\parbox{\textwidth}{
\begin{algorithm}[Local fundamental class]
\rm\hspace*{1cm}%\\\noindent
\label{alg:lfc}
\Input{An extension $L/K$ over $\QQ_p$ with Galois group $G$ and a precision $k\in\NN$.}
\Output{The local fundamental class $u_{L/K}\in C^2(G, L^\times)$
up to the finite precision $k$, i.e.\ its image in $\TateH^2(G, L^\times/U_L^{(k)})$.}
\begin{enumAlgo}
%\item Let $\pi_K$ and $\pi_L$ be uniformizing elements of $K$ and $L$,
%$e$ the ramification degree and $d$ the inertia degree of $L/K$.
%Let $F$ be the unramified extension of $L$ of degree $e$, 
%$L_{nr}=\prod_d F$ and let $E$ be the maximal unramified extension of $K$ in $L$
%with Frobenius automorphism $\varphi$.
\item Solve the norm equation $N_{F/L}(v)\equiv u \;{\rm mod}\, U_L^{(k+2)}$ with $u=\pi_K \pi_L^{-e}\in U_L$ and $v\in U_F / U_F^{(k+2)}$.
Define $\pi=v\pi_L$.
\item For each $\sigma\in G$, let $\hat\sigma\in\Gal(F/K)$ be the  automorphism which is uniquely
determined by $\hat\sigma|_L=\sigma$ and $(\hat\sigma|_N)^{-1}=\varphi^j$ with $0\le j\le d-1$.
Then compute $u_\sigma\in U_F$ such that
$u_\sigma^{\varphi^d-1}=\frac{\hat\sigma(\pi)}{\pi}\mod U_F^{(k+2)}$.
\item Define $\beta\in C^1(G,L_{nr}^\times)$ and $\gamma\in C^2(G, L^\times)$ by
\eqref{eq:defBeta} and \eqref{eq:defGamma}.
% \begin{align*}
% \beta(\sigma) &= \begin{cases}
% \left(u_\sigma,\ldots,u_\sigma\right), & \text{if } \sigma|_E=1 \\
% (
% \underbrace{u_\sigma,\ldots,u_\sigma}_{j-1},
% u_\sigma \sigma(\pi),\ldots,u_\sigma \sigma(\pi)
% ), & \text{if } \sigma|_E=\varphi^j
% \end{cases}\\[.5em]
% \text{and}\quad
% \gamma(\sigma,\tau)&= (\partial_2 \beta)(\sigma, \tau) = \frac{\sigma(\beta(\tau)) \beta(\sigma)}{\beta(\sigma\tau)}
% \end{align*}
\end{enumAlgo}
\noindent \textit{Return:} {$\gamma^{-1}$.}
\vspace{-.5em}
\end{algorithm}
}
\medskip

Note that the choice of $j$ in this algorithm corresponds to the choice of $j$ in the proof of \autoref{prop:inverse},
see in particular equation \eqref{eq:defBeta}.

\begin{proof}[Proof of correctness]
Step 1: Since $u$ has valuation 0 and $F/L$ is unramified, there exists an
element $v\in U_F$ such that its norm is equal to $u$.
Then $\pi$ is a uniformizing element of $F$ and has norm
$N_{F/L}(\pi)=u \pi_L^e = \pi_K$.

Step 2: The elements $\frac{\hat\sigma(\pi)}{\pi}$ have norm
\begin{equation*}
N_{F/L}\!\left(\frac{\hat\sigma(\pi)}{\pi}\right)
= \frac{1}{\pi_K} \prod_{i=1}^e \varphi^{di}\Big(\hat\sigma(\pi)\Big)
= \frac{1}{\pi_K} \,\hat\sigma\Big(\prod_{i=1}^e \varphi^{di}(\pi)\Big)
= 1.
\end{equation*}
Let $H=\Gal(F/L)$.
Since $\TateH^{-1}(H, U_F)= {}_{N_H}U_F/I_H U_F=1$ for the unramified extension $F/L$, 
there exists $x\in U_F$ with $x^{\varphi-1}=\frac{\hat\sigma(\pi)}{\pi}$.
By successively applying the steps in the constructive proof
of \cite[V, Lem.\ 2.1]{Neu:AlgZThEn} (see \autoref{lem:Neukirch})
one can construct an element $x\in U_F$ with $x^{\varphi-1}\equiv\frac{\hat\sigma(\pi)}{\pi}\mod U_F^{(k+2)}$.

Step 3: The computation in the proof of \autoref{prop:inverse}
shows that the cocycle $\gamma$ from \eqref{eq:defGamma} represents
the inverse of the local fundamental class.

If we compute the elements $u_\sigma$ modulo $U_F^{(k+2)}$,
we also know the images of $\beta$ to the same precision.
To compute $\gamma^{-1}$ we divide by $\sigma(\beta(\tau))$ and $\beta(\sigma)$
and each of these operations can reduce the precision at most by one
because all elements in (\ref{eq:defBeta}) have at most valuation $1$.
The other operations involved in $\partial_2$ (addition, multiplication
and application of $\sigma$) do not reduce the precision.
Hence, we know the images of $\gamma$ modulo $U_L^{(k)}$.
\end{proof}

This algorithm has been implemented in \textsc{Magma} \cite{Magma} and
its source code is bundled with the second author's dissertation
\cite{MyPhD}.
For a small example where the Galois group is $G=S_3$,
this algorithm computes the local fundamental class within a few seconds
whereas the direct linear algebra method took more than an hour.

The implementation of this more efficient algorithm made several interesting 
applications possible.
In the second author's dissertation \cite{MyPhD}, \autoref{alg:lfc} is used
in algorithms for computations in Brauer groups of (global) number field extensions
and for the computation of global fundamental classes.
In addition, the algorithm was also applied in a completely different context:
based on the Shafarevic-Weil theorem,
Greve used the algorithm in his dissertation \cite{Greve}
to compute Galois groups of local extensions.

\section{Epsilon constant conjectures}
\label{sec:epsconj}
We recall the statements of the global and local epsilon constant conjectures
of \cite{BleyBur:Equiv} and \cite{Breu:LocEps}
and some important related results.
These conjectures are formulated as equations in relative $K$-groups for group rings.

%\subsection{$K$-groups for group rings}
Let $R$ be  an integral domain, $E$ an extension of $\Quot(R)$ and $G$ a finite group.
For a ring $A$ we write $K_0(A)$ for the Grothendieck group of finitely generated
projective $A$-modules
and $K_1(A)$ for the abelianization of the infinite general linear group $\Gl(A)$.
Then there is an exact sequence
\begin{equation}
K_1(R[G]) \rightarrow K_1(E[G]) \stackrel{\partial^1_{R[G],E}}{\vlongrightarrow{35pt}} K_0(R[G],E)
\rightarrow K_0(R[G]) \rightarrow K_0(E[G])
\end{equation}
with the relative algebraic $K$-group $K_0(R[G],E)$ defined in
terms of generators and relations as in \cite[p.\ 215]{Swan}.
An overview of the relevant results concerning these $K$-groups is given in \cite{Breu:PhD}.
%For a definition of these $K$-groups in terms of generators and relations see \cite{Breu:LocEps}.
We write $\Zent(E[G])$ for the center of $E[G]$ and we
will use the reduced norm map $\nr: K_1(E[G]) \rightarrow \Zent(E[G])^\times$,
which is injective in our cases,
and the map $\widehat\partial^1_{R[G],E}:=\partial^1_{R[G],E}\circ \nr^{-1}$
from $\im(\nr)$ to $K_0(R[G],E)$.

The two cases we are interested in are the following.
For $R=\ZZ_p$ and $E$ an extension of $\QQ_p$ the reduced norm is an isomorphism
(e.g.\ see \cite[Prop.\ 2.2]{Breu:PhD})
and we obtain a map ${\widehat\partial}^1_{G,E}:=\widehat\partial^1_{\ZZ_p[G], E}
=\partial^1_{\ZZ_p[G],E} \circ \nr^{-1}$
from $\Zent(E[G])^\times$ to $K_0(\ZZ_p[G],E)$.

For $R=\ZZ, E=\RR$ the reduced norm map is not surjective
but the decomposition 
\begin{equation}
\label{eq:decomp}
K_0(\ZZ[G],\QQ) \simeq \coprod_p K_0(\ZZ_p[G], \QQ_p),
\end{equation}
and the Weak Approximation Theorem
still allow us to define a canonical map ${\widehat\partial}^1_{G,\RR}$
from $\Zent(\RR[G])^\times$ to $K_0(\ZZ[G],\RR)$, such that
$ {\widehat\partial}^1_{G,\RR} \circ \nr = {\partial}^1_{G,\RR}$, 
cf. \cite[§3.1]{BleyBur:Equiv} or~\cite[Lem.~2.2]{BreBur:07:LeadingTerms}.
% by
%${\widehat\partial}^1_{G,\RR}(x) := \widehat\partial^1_{\ZZ[G],\RR}(\lambda x)-
%\sum_p \widehat\partial^1_{\ZZ_p[G],\QQ_p}(\lambda)$
%where the summation ranges over all primes and
%$\lambda\in Z(\QQ[G])^\times\subseteq Z(\QQ_p[G])^\times$
%must be chosen such that $\lambda x\in \im(\nr)$.
% and $\widehat\partial_p$ is the composite of
%$Z(\QQ[G])^\times\subset Z(\QQ_p[G])^\times$,
%${\widehat\partial}^1_{G,\QQ_p}$ and $K_0(\ZZ_p[G],\QQ_p)\subset K_0(\ZZ[G],\RR)$.
%One can show that this definition does not depend on the choice of $\lambda$ and
%provides a well-defined unique map from $Z(\RR[G])$ to $K_0(\ZZ[G],\RR)$,
%cf. \cite[§3.1]{BleyBur:Equiv} or~\cite[Lem.\ 2.2]{BreBur:07:LeadingTerms}.

%Altogether, we have homomorphisms
%$\partial_{G,E}^1:Z(E[G])^\times\rightarrow K_0(\ZZ_p[G], E)$ for $E/\QQ_p$,
%$\widehat\partial^1_{G,\QQ}:Z(\QQ[G])^\times \rightarrow K_0(\ZZ[G],\QQ)$,
%and
%${\widehat\partial}^1_{G,\RR}: Z(\RR[G])^\times \rightarrow K_0(\ZZ[G],\RR)$.

\subsection{The global epsilon constant conjecture}
The global epsilon constant conjecture is formulated in the
relative $K$-group $K_0(\ZZ[G],\RR)$.
%, so in our previous notation we have $R=\ZZ$ and $E=\RR$.
For a Galois extension $L/K$ of number fields 
it describes a relation between the epsilon factors arising in  the functional
equation of Artin $L$-functions and algebraic invariants
related to $L/K$. We briefly sketch its formulation which is due to Burns and the first named author 
and refer to \cite{BleyBur:Equiv} for more details.

The completed Artin $L$-function $\Lambda(L/K,\chi,s)$  
satisfies the functional equation
\begin{equation}
\Lambda(L/K,\chi,s) = \varepsilon(L/K,\chi,s)\; \Lambda(L/K,\bar\chi,1-s)
\end{equation}
with an epsilon factor
$\varepsilon(L/K,\chi,s):=W(\chi) A(\chi)^{\frac{1}{2}-s}$
and $W(\chi), A(\chi)$ as defined in \cite[Chp.\ I, (5.22)]{Froe:GalMod}.
The equivariant epsilon function is defined by
$\varepsilon(L/K, s) := \left(\varepsilon(L/K,\chi,s)\right)_{\chi\in\Irr(G)}$
and its value $\epsilon_{L/K}:=\varepsilon(L/K, 0)\in \Zent(\RR[G])^\times$
is called the equivariant global epsilon constant.
%We then define
%$\epsilon_{L/K}:=\left(\varepsilon(L/K,\chi,0)\right)_{\chi\in\Irr(G)}\in Z(\RR[G])^\times$
%and
%we we obtain an element representing $\epsilon_{L/K}$ in $K_0(\ZZ[G],\RR)$
%we define the equivariant epsilon constant by
We define a corresponding element in the relative $K$-group $K_0(\ZZ[G],\RR)$ by
$\scrE_{L/K} := \widehat\partial^1_{G,\RR}(\epsilon_{L/K})$ and
also refer to it as the equivariant global epsilon constant.

% \begin{equation}
% \scrE_{L/K} := {\widehat\partial}^1_G(\lambda \epsilon_{L/K})
%  -\sum_p {\widehat\partial}^{1,p}_{G,p}(\lambda) \in K_0(\ZZ[G],\RR)
% \end{equation}
% where the summation ranges over all primes and
% $\lambda$ must be choosen such that $\lambda \epsilon_{L/K}\in \im(\nr)$.
% Since this definition can be shown to be independent of this choice we also
% refer to this element as the equivariant global epsilon constant.

Let $S$ be a finite set of non-archimedean places of $K$, including all non-archimedean places which ramify in $L$.
For each $v\in S$ with $v|p$ we fix a place $w$ of $L$ above $v$ and choose
a full projective $\ZZ_p[G_w]$-sublattice $\scrL_w$ of $\calO_{L_w}$
upon which the $v$-adic exponential map is well-defined (and hence injective).
For each place $w$ which does not lie above some $v\in S$ we set $\scrL_w=\calO_{L_w}$
and we define $\scrL\subseteq \calO_L$ by its $p$-adic completions
\begin{equation*}
\scrL_p = \prod_{v|p} \scrL_w \otimes_{\ZZ_p[G_w]} \ZZ_p[G]
\subseteq L_p := L\otimes_\QQ \QQ_p.
\end{equation*}
For each finite place $w$ of $L$ we also write $w : L^\times \longrightarrow \ZZ$ for the standard valuation of $L$
normalized such that $w(L^\times) = \ZZ$.
We let $\Sigma(L)$ denote the set of all embeddings of $L$ into $\CC$ and set  
$H_L:=\prod_{\sigma\in\Sigma(L)} \ZZ$.
We define the $G$-equivariant discriminant by $\delta_{L/K}(\scrL):=\![\scrL, \pi_L, H_L]\in K_0(\ZZ[G],\RR)\!$
where $\pi_L$ is induced by
$\rho_L:L\otimes_\QQ \CC\rightarrow H_L\otimes_\ZZ\CC$, $l\otimes z \mapsto (\sigma(l)z)_{\sigma\in\Sigma(L)}$
as in \cite[§3.2]{BleyBur:Equiv}.

Let $X\subseteq \calO_{L_w}^\times$ be any  cohomologically trivial $\ZZ[G_w]$-submodule of finite index, 
e.g.\ $X\!=\exp_v(\scrL_w)$.
Then $\TateH^2(G_w, L_w^\times)\simeq \TateH^2(G_w, L_w^\times/X)$ and
by \cite[Th.\ 2.2.10]{NSW:00} % or \cite[III, Prop.\ 2.2]{Brown}
there is an isomorphism $\TateH^2(G_w, L_w^\times/ X)\simeq\Ext^2_{G_w}(\ZZ, L_w^\times/ X)$.
For a cocycle $\gamma\in \TateH^2(G_w, L_w^\times/ X)$ one can apply the
construction of \cite[p.\ 115]{NSW:00} to obtain a 2-extension
\[
0\rightarrow L_w^\times/ X\rightarrow C(\gamma)\rightarrow\ZZ[G_w]\rightarrow\ZZ\rightarrow 0
\]
representing $\gamma$ in $\Ext^2_{G_w}(\ZZ, L_w^\times/ X)$.
%Then the complex $\big[ C(\gamma)\rightarrow\ZZ[G_w]\big]$ also represents $\gamma$
%and has cohomology $L_w^\times/ X$ in degree 0
%and $\ZZ$ in degree 1.

If $\gamma$ represents the local fundamental class then the complex 
\[
K^\bullet_w(X) := \big[ C(\gamma)\rightarrow\ZZ[G_w]\big]
\]
is perfect. Here the  modules are placed in degrees $0$ and $1$.
We write $E_w(X)$ for its refined Euler characteristic in $K_0(\ZZ[G_w],\QQ)$
where the trivialization 
\[
H^0(K^\bullet_w(X)) \otimes_\ZZ \QQ \simeq 
L_w^\times/X\otimes_\ZZ \QQ \stackrel\simeq\longrightarrow \QQ \simeq H^1(K^\bullet_w(X)) \otimes_\ZZ \QQ
\]
is induced by the valuation map $w : L_w^\times \rightarrow \ZZ$.
For the general construction of refined Euler characteristics we refer the reader to
\cite[§2]{Burns:Whitehead}.
For the construction in our special case see \cite[§3.3]{BleyBur:Equiv}, in particular,
a triple representing $E_w(X)$ in $K_0(\ZZ[G_w],\QQ)$ is given in
\cite[Lem.\ 3.7]{BleyBur:Equiv}.

Furthermore, let $m_w\in \Zent(\QQ[G_w])^\times$ be the element defined
in \cite[§\,4.1]{BleyBur:Equiv} which we also call the \emph{correction term.}
It is defined as follows.
For a subgroup $H\subseteq G$ and $x\in \Zent(\QQ[H])$
we let ${}^*x\in \Zent(\QQ[H])^\times$ denote the invertible element
which on the \emph{Wedderburn decomposition} $\Zent(\QQ[H])=\prod_{i=1}^r F_i$
with suitable extensions $F_i/\QQ$
is given by
${}^*x=({}^*x_i)_{i=1\ldots r}$
with 
${}^*x_i=1$ if $x_i=0$ and ${}^*x_i=x_i$ otherwise.
If $\varphi_w$ denotes a lift of the Frobenius automorphism in $G_w/I_w$,
then the correction term is defined by
\begin{equation}
\label{eq:def-correction}
m_w=\frac{ {}^*(|G_w/I_w|e_{G_w})\cdot {}^*((1-\frob_w{\Nr\!v}^{-1}) e_{I_w})
 }{  {}^*( (1-\frob_w^{-1})e_{I_w} ) }
\in \Zent(\QQ[G_w])^\times.
\end{equation}

Finally, we define elements
\begin{align*}
I_G(v,\scrL)
&:={\rm i}_{G_w}^G\big( {\widehat\partial}^1_{G_w, \RR}(m_w) - E_w(\exp_v(\scrL_w)) \big) \\
\text{and}\quad T\Omega^{\rm loc}(L/K,1) &:=
\scrE_{L/K} - \delta_{L/K}(\scrL) -
\textstyle\sum_{v\in S} I_G(v,\scrL)
% {\rm i}_{G_w}^G\!\left( {\widehat\partial}^1_{G_w,\QQ_p}(m_w) - E_w(\exp_v(\scrL_w))  \right)
\end{align*}
in $K_0(\ZZ[G],\RR)$. One can show that $T\Omega^{\rm loc}(L/K,1)$
is independent of the choices of $S$ and $\scrL$ (cf.\ \cite[Rem.\ 4.2]{BleyBur:Equiv}).
By \cite[Prop.\ 3.4]{BleyBur:Equiv}) we have   $T\Omega^{\rm loc}(L/K,1) \in K_0(\ZZ[G],\QQ)$ 
and we can state the conjecture as follows.
%\pagebreak

\begin{conjecture}[Global epsilon constant conjecture]
\label{conj:glob}
For every finite Galois extension $L/K$ of number fields
the element
% %Any set $S$ and \linebreak any lattice $\scrL$ as above satisfy the equation
% \begin{equation*}
% T\Omega^{\rm loc}(L/K,1) :=
% \scrE_{L/K} - \delta_{L/K}(\scrL) -
% \sum_{v\in S} I_G(v,\scrL)
% % {\rm i}_{G_w}^G\!\left( {\widehat\partial}^1_{G_w,\QQ_p}(m_w) - E_w(\exp_v(\scrL_w))  \right)
% \end{equation*}
$T\Omega^{\rm loc}(L/K,1)$ is zero in $K_0(\ZZ[G],\QQ)$.
We denote this conjecture by $\EPS(L/K)$.
\end{conjecture}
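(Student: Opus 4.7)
The statement is itself a conjecture rather than a theorem, so a ``proof'' in the strict sense is out of reach; what the paper evidently does, and what I would attempt, is to reduce the conjecture for particular $(K,G)$ to a finite computational problem and then dispatch the cases one by one. The plan rests on three pieces already assembled in the introduction: the Wedderburn-style decomposition $K_0(\ZZ[G],\QQ) \simeq \coprod_p K_0(\ZZ_p[G],\QQ_p)$, the relation $T\Omega^{\rm loc}(L/K,1)_p = \sum_{v \mid p} {\rm i}_{G_w}^G(R_{L_w/K_v})$ from \cite[Thm.~4.1]{Breu:LocEps}, and Breuning's result that $R_{E/F}=0$ whenever $E/F$ is tamely ramified. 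Together these replace the global equation $T\Omega^{\rm loc}(L/K,1)=0$ with the assertion that $R_{L_w/K_v}=0$ for each wildly ramified place $v$ of $K$ and a chosen place $w$ above it.

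The next step is to observe that for a fixed $G$ only finitely many isomorphism classes of local Galois extensions $E/\QQ_p$ can arise as completions at wildly ramified places of an $L/K$ with $\Gal(L/K)\simeq G$, since the ramification index, residue degree and discriminant of such an $E$ are all bounded in terms of $|G|$ and the wild ramification structure. Thus the global conjecture for a given pair $(K,G)$ is reduced to a finite, explicit list of instances of the local conjecture. I would then attack each instance using the algorithm of Bley--Breuning in \cite{BleyBreu:ExactAlgo}, which computes a representative of $R_{E/F}$ in $K_0(\ZZ_p[D],\QQ_p)$ up to arbitrary $p$-adic precision and tests whether it vanishes. To make this algorithm practicable one needs (i) an effective computation of the local fundamental class $u_{E/F}$, supplied by \autoref{alg:lfc} above, (ii) a way to realize $E$ as the completion at a prime of a global Galois extension so that archimedean/global invariants can be accessed (the heuristic method developed later in \autoref{sec:globalrep}), and (iii) accurate bookkeeping of precisions so that finite approximations suffice to decide vanishing in the relevant relative $K$-group.

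The hard part, in my estimation, is not a single clean theoretical obstruction but rather the cumulative complexity of the computations. The direct cohomological approach to $u_{E/F}$ via cochain modules $C^\bullet(G,L^\times/U_L^{(k)})$ is already hopeless for modest $[E:F]$, which is precisely why the Serre--Neukirch construction through $\what L_{nr}^\times$ had to be turned into \autoref{alg:lfc}. A further obstacle is that for certain local extensions, most conspicuously the unramified extension of degree $8$ over $\QQ_2$, Wang's counterexample to Grunwald forbids a degree-preserving global realization, and one is forced to work with a larger global extension (here $[L:\QQ]=16$ with $[K:\QQ]=2$); this inflates every subsequent computation and is the reason the final theorem is unable to treat all wildly ramified abelian extensions of $\QQ_2$ up to degree $15$. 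Everything else in the proposal is systematic: reduce globally, enumerate the finitely many local cases compatible with Property $(*)$, and verify each by the algorithm, relying on the improvements of Sections~\ref{sec:lfc}--\ref{sec:algoproof} to keep the computations tractable.
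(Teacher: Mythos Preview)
Your proposal is correct and matches the paper's approach: you rightly observe that \autoref{conj:glob} is a conjecture with no general proof, and your outline of how the paper establishes special cases---reduce to $p$-parts via \eqref{eq:decomp}, invoke the local-global relation of \autoref{thm:localglobal}, discard tame places by Breuning's result, enumerate the finitely many remaining wildly ramified local extensions, and verify each via the algorithm of \cite{BleyBreu:ExactAlgo} with the improvements of \autoref{alg:lfc} and \autoref{sec:globalrep}---is exactly the strategy carried out in \autoref{sec:results} to obtain \autoref{thm:loceps} and its corollaries. Your identification of the computational bottlenecks (the cochain approach to the fundamental class, and the Wang obstruction forcing $[K:\QQ]=2$ for the degree-$8$ unramified extension of $\QQ_2$) is also accurate.
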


%One can show that $T\Omega^{\rm loc}(L/K,1)$ does not depend on the choices of $S$ or $\scrL$
This conjecture has been proved for tamely ramified extensions (\cite[Cor.~7.7]{BleyBur:Equiv}),
for abelian extensions $L/\QQ$ (see the proof of our \autoref{cor:globeps}),
for all $S_3$-extensions $L/\QQ$ (\cite{Breu:Dihedral}), and finally, for certain infinite families of  
dihedral and quaternion extensions (\cite{Breu:LocEps}).
Moreover, the global conjecture $\EPS(L/K)$ is known to be valid modulo the subgroup $K_0(\ZZ[G],\QQ)_\tor$,
i.e.,\ $T\Omega^{\rm loc}(L/K,1)\in K_0(\ZZ[G],\QQ)_\tor$ (see \cite[Cor.~6.3]{BleyBur:Equiv}).

We write $\EPS_p(L/K)$ for the projection of the conjecture
onto $K_0(\ZZ_p[G], \QQ_p)$ via the decomposition \eqref{eq:decomp}.
We immediately obtain

\begin{corollary}
The global conjecture $\EPS(L/K)\!$ is valid if and only if
its $p$-part $\EPS_p(L/K)$ is valid for all primes $p$.
\end{corollary}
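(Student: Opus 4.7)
The plan is to exploit the canonical decomposition
\[
K_0(\ZZ[G],\QQ) \simeq \coprod_p K_0(\ZZ_p[G], \QQ_p)
\]
from \eqref{eq:decomp}, which is an isomorphism of abelian groups. Since this is a coproduct (direct sum) decomposition, an element $x \in K_0(\ZZ[G],\QQ)$ vanishes if and only if each of its components $x_p \in K_0(\ZZ_p[G], \QQ_p)$ vanishes.

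Concretely, I would first invoke \cite[Prop.~3.4]{BleyBur:Equiv} (quoted above) to recall that $T\Omega^{\rm loc}(L/K,1)$ a priori lives in the rational relative $K$-group $K_0(\ZZ[G],\QQ)$, so that the decomposition above is actually applicable to it. Then I would define $T\Omega^{\rm loc}(L/K,1)_p$ to be the image of $T\Omega^{\rm loc}(L/K,1)$ under the projection to $K_0(\ZZ_p[G],\QQ_p)$; by definition this is the element whose vanishing is the statement $\EPS_p(L/K)$. The forward implication ($\EPS(L/K) \Rightarrow \EPS_p(L/K)$ for all $p$) is then immediate from functoriality of the projection, and the reverse implication ($\EPS_p(L/K)$ for all $p$ implies $\EPS(L/K)$) follows directly from the injectivity of the map into the product (equivalently, from the fact that a direct sum element is zero iff all summands are).

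There is essentially no obstacle here: the content of the corollary is purely that the decomposition \eqref{eq:decomp} respects the formation of $T\Omega^{\rm loc}(L/K,1)$ componentwise, which is built into its definition. The only subtle point worth mentioning in a proper write-up is the rationality assertion $T\Omega^{\rm loc}(L/K,1) \in K_0(\ZZ[G],\QQ)$, without which the decomposition \eqref{eq:decomp} could not be applied in the first place; this is precisely the role of citing \cite[Prop.~3.4]{BleyBur:Equiv}.
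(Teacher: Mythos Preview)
Your proposal is correct and matches the paper's approach exactly: the paper simply states ``We immediately obtain'' this corollary from the decomposition \eqref{eq:decomp}, and your write-up spells out precisely this immediate argument, including the rationality input from \cite[Prop.~3.4]{BleyBur:Equiv} that makes the decomposition applicable.
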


\subsection{The local epsilon constant conjecture}
We will now describe a related conjecture  for local Galois extensions
$L_w/K_v$ over $\QQ_p$, which was formulated by Breuning in \cite{Breu:LocEps},
and we will see how it refines the global conjectures
$\EPS(L/K)$ and $\EPS_p(L/K)$.
The equivariant global epsilon function of $L/K$ can be written as
a product of equivariant local epsilon functions related
to its completions $L_w/K_v$.
Their value at zero is called the equivariant local
epsilon constant and the local conjecture describes
it in terms of algebraic elements of the extension $L_w/K_v$.
Here we refer to \cite{Breu:LocEps} %and \cite{BleyBreu:ExactAlgo}
for details.

Let $\CC_p$ denote the completion of an algebraic closure $\QQ_p^c$ of $\QQ_p$.
Every character $\chi$ of $G_w=\Gal(L_w/K_v)$ 
can be viewed as a character of $\Gal(\QQ_p^c / K_v)$.
The local Galois Gauss sum from \cite[Chp.~II, §\,4]{Martinet} associated with the induced
character ${\rm i}_{K_v}^{\QQ_p}\chi$   of $\Gal(\QQ_p^c / \QQ_p)$ will be denoted 
by $\tau_{L_w/K_v}(\chi)\in \CC$ and we set
\[
\tau_{L_w/K_v} := \big( \tau_{L_w/K_v}(\chi) \big)_{\chi\in\Irr_\CC(G_w)}
\in \Zent(\CC[G_w])^\times.
\]
The choice of an embedding $\iota\!:\CC\!\rightarrow \CC_p$ induces a map
$\Zent(\CC[G_w])^\times\!\!\rightarrow\! \Zent(\CC_p[G_w])^\times$
and we obtain the \emph{equivariant local epsilon constant}
\[
T_{L_w/K_v} := \widehat\partial^1_{G_w, \CC_p}(\iota(\tau_{L_w/K_v}))\in K_0(\ZZ_p[G_w], \CC_p).
\]

As in the global case one chooses a full projective $\ZZ_p[G_w]$-sublattice $\scrL_w$
of $\calO_{L_w}$ upon which the exponential function is well-defined.
%and injective
Similarly one defines the \emph{equivariant local discriminant}
in $K_0(\ZZ_p[G_w],\CC_p)$ by
%\begin{equation}
%\label{eq:def-local-discriminant}
$\delta_{L_w/K_v}(\scrL_w)=[\scrL_w, \rho_{L_w}, H_{L_w}]$,
%\end{equation}
where $H_{L_w}=\bigoplus_{\sigma\in\Sigma(L_w)}\ZZ_p$
and $\rho_{L_w}$ is the isomorphism
%\begin{align*}
$\scrL_w \otimes_{\ZZ_p} \CC_p \rightarrow H_{L_w}\otimes_{\ZZ_p} \CC_p$,
$l\otimes z\mapsto (\sigma(l) z)_{\sigma\in\Sigma(L_w)}.$
%\end{align*}
Here $\Sigma(L_w)$ denotes the set of embeddings $L_w\hookrightarrow \CC_p$.
%By the surjectivity of the homomorphism $\partial^1 = \partial^1_{\ZZ_p[G_w], \CC_p} : 
%K_1(\CC_p[G_w])\rightarrow K_0(\ZZ_p[G_w], \CC_p)$
%the equivariant local discriminant
%is represented by an element $d_{L_w/K_v}\in \CC_p[G_w]^\times\subseteq K_1(\CC_p[G_w])$.
There is an explicit description of $\delta_{L_w/K_v}(\scrL_w)$  
in \cite[§\,4.2.5]{BleyBreu:ExactAlgo} which we will
recall in \autoref{sec:lec-terms}.

\index{Euler characteristic}
We write $E_w(\exp_v(\scrL_w))_p$ for the projection of the Euler characteristic $E_w(\exp_v(\scrL_w))$ onto
$K_0(\ZZ_p[G_w], \QQ_p)$ by \eqref{eq:decomp}.
The difference $C_{L_w/K_v} := E_w(\exp_v(\scrL_w))_p-\delta_{L_w/K_v}(\scrL_w)$
is independent of $\scrL_w$ by \cite[Prop.~2.6]{Breu:LocEps}
and is called the \emph{cohomological term} of $L_w/K_v$.

To state the local conjecture we also need 
the \emph{unramified term} $U_{L_w/K_v}$.
It is the unique element in $K_0(\ZZ_p[G_w], \CC_p)$ which satisfies the 
Galois invariance property  \cite[Prop.~2.12 b)]{Breu:LocEps}
and is mapped to zero by the scalar extension map 
$K_0(\ZZ_p[G_w], \QQ_p)\rightarrow K_0(\calO^t_p[G_w], \CC_p)$
where $\calO^t_p$ is the ring of integers of the maximal tamely ramified
extension of $\QQ_p$ in $\CC_p$.
The proof of its existence in \cite[Prop.~2.12]{Breu:LocEps}
includes an explicit description of $U_{L_w/K_v}$,
which we will also recall in \autoref{sec:lec-terms}.

\begin{conjecture}[Local epsilon constant conjecture]
For every Galois extension $L_w/K_v$ 
of local fields over $\QQ_p$ 
%and any lattice $\scrL_w$ as above 
the element
\[
R_{L_w/K_v} := T_{L_w/K_v} + C_{L_w/K_v}%E_w(\exp_v(\scrL_w))_p -\delta_{L/K}(\scrL_w)
+ U_{L_w/K_v} - \widehat\partial^1_{G_w, \CC_p}(m_w)
\]
is zero in $K_0(\ZZ_p[G_w],\CC_p)$.
We denote this conjecture by $\EPSloc(L_w/K_v)$.
\end{conjecture}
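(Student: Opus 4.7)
The plan is not to prove the conjecture in full generality -- it remains open -- but to describe an algorithmic strategy for verifying $\EPSloc(L_w/K_v)$ for an individual extension, which is what the paper's main result \autoref{thm:loceps} ultimately relies on. Since $R_{L_w/K_v}$ is a sum of four explicit elements of $K_0(\ZZ_p[G_w], \CC_p)$, the natural approach is to compute each term separately and verify that their total vanishes.

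First I would dispose of the comparatively easy terms. The correction contribution $\widehat\partial^1_{G_w, \CC_p}(m_w)$ follows directly from the closed formula \eqref{eq:def-correction} in terms of Frobenius, inertia subgroup, and residue norm. The unramified term $U_{L_w/K_v}$ admits the explicit description supplied in the proof of \cite[Prop.~2.12]{Breu:LocEps}, which one simply evaluates. The equivariant epsilon constant $T_{L_w/K_v}$ is assembled character-by-character from local Galois Gauss sums via the character table of $G_w$, and then pushed into $K_0(\ZZ_p[G_w], \CC_p)$ by $\widehat\partial^1_{G_w, \CC_p}$ after fixing the embedding $\iota$.

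The hard piece is the cohomological term $C_{L_w/K_v} = E_w(\exp_v(\scrL_w))_p - \delta_{L_w/K_v}(\scrL_w)$. To compute the refined Euler characteristic one needs a cocycle $\gamma$ representing the local fundamental class $u_{L_w/K_v}$ modulo $U_{L_w}^{(k)}$ for sufficiently large $k$; this is precisely the output of \autoref{alg:lfc}. With such a $\gamma$ in hand one assembles the two-extension of \cite[p.~115]{NSW:00}, reads off a triple representing $E_w(\exp_v(\scrL_w))$ via the recipe of \cite[Lem.\ 3.7]{BleyBur:Equiv}, and subtracts the discriminant $\delta_{L_w/K_v}(\scrL_w)$ whose explicit form is given in \cite[§\,4.2.5]{BleyBreu:ExactAlgo}.

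The main obstacle, distinguishing a genuine certificate from a heuristic, is $p$-adic precision control: each term is computed only modulo some power of the maximal ideal of $\calO_{L_w}$, whereas $R_{L_w/K_v}$ must be shown to equal zero \emph{exactly} in $K_0(\ZZ_p[G_w], \CC_p)$. The remedy is to exploit the fact, analogous to \cite[Cor.\ 6.3]{BleyBur:Equiv}, that $R_{L_w/K_v}$ lies a priori in a finitely generated subgroup of bounded torsion exponent, so vanishing modulo a sufficiently high power already forces exact vanishing. A secondary practical hurdle, to be addressed in \autoref{sec:globalrep}, is realising $L_w/K_v$ as the completion of a suitable global Galois extension so that exact arithmetic on cocycles and lattices becomes available in the first place.
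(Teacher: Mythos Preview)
Your outline matches the paper's approach closely: the paper likewise recalls \cite[Alg.~4.2]{BleyBreu:ExactAlgo}, computes $m_w$, $d_{L_w/\QQ_p}$, $u_{L_w/\QQ_p}$, and $\tau_{L_w/\QQ_p}$ as tuples in $\prod_\chi E^\times$, obtains the Euler characteristic from a cocycle for the local fundamental class produced by \autoref{alg:lfc}, and then checks that the sum vanishes in $K_0(\ZZ_p[G],E_\frQ)$.

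The one substantive divergence is your treatment of precision. You propose to compute $R_{L_w/K_v}$ only approximately and then invoke an a priori torsion bound to conclude exact vanishing. The paper does not argue this way, and it is not clear how ``vanishing modulo a sufficiently high power'' would even be formulated for an element of $K_0(\ZZ_p[G_w],\CC_p)$. Instead, the paper arranges for every ingredient to be computed \emph{exactly}: the Gauss sums, discriminant, unramified term and correction term are algebraic numbers in a fixed number field $E$; the cocycle for $u_{L_w/K_v}$ is needed only modulo $U_{L_w}^{(k)}$ with $k$ chosen so that $U_{L_w}^{(k)}\subseteq\exp_v(\scrL_w)$, whence its image in $\TateH^2(G,L_w^\times/\exp_v(\scrL_w))$ and the resulting Euler characteristic are exact; and the final equality test is carried out via the explicit isomorphism $K_0(\ZZ_p[G],F_\frq)\simeq I(C_\frp)\times\coker(\mu_\frp)$ of \cite{BleyWilson} (see \autoref{prop:K0Rel-iso} and \autoref{prop:K0Rel-iso-F}). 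You do not mention this last step, but it is what actually certifies $R_{L_w/K_v}=0$ rather than merely $R_{L_w/K_v}\approx 0$.
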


This conjecture has been proved in  \cite{Breu:LocEps} for tamely ramified extensions,
for abelian extensions $M/\QQ_p$ with $p\neq2$, for all $S_3$-extensions of $\QQ_p$ ($p$ arbitrary),
and for certain  other special cases.
Actually some of the results on the global conjecture were obtained
by the local conjecture which 
can be regarded as a refinement of the $p$-part
of the global conjecture.

\begin{theorem}[Local-global principle]
\label{thm:localglobal}
One has the equality
\[
T\Omega^{\rm loc}(L/K,1)_p = \sum_{v|p} {\rm i}_{G_w}^G (R_{L_w/K_v})
\]
in $K_0(\ZZ_p[G], \QQ_p)$
and one can deduce:
\begin{enumerate}[(i)]
\item $\EPSloc(E/F)$ for all $E/F/\QQ_p$ $\Rightarrow$ $\EPS_p(L/K)$ for all $L/K/\QQ$,

\item if $p\neq 2$: $\EPS_p(L/K)$ for all $L/K/\QQ$ $\Rightarrow$ $\EPSloc(E/F)$ for all $E/F/\QQ_p$, and

\item for fixed $L/K/\QQ$ and $p$: $\EPSloc(L_w/K_v)$ for all $w|v|p$ $\Rightarrow$ $\EPS_p(L/K)$.
\end{enumerate}
\end{theorem}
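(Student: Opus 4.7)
The plan is to first establish the displayed equality and then deduce (i)--(iii) from it by unwinding.

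For the main identity, I would substitute the definitions of both sides. Using $C_{L_w/K_v}=E_w(\exp_v(\scrL_w))_p-\delta_{L_w/K_v}(\scrL_w)$ and the definition of $I_G(v,\scrL)$, the terms involving $\widehat\partial^1_{G_w,\CC_p}(m_w)$ and $E_w(\exp_v(\scrL_w))_p$ cancel between $\sum_{v\mid p}{\rm i}_{G_w}^G R_{L_w/K_v}$ and $-\sum_{v\in S}I_G(v,\scrL)_p$, so it suffices to prove
$$\scrE_{L/K,p}-\delta_{L/K}(\scrL)_p=\sum_{v\mid p}{\rm i}_{G_w}^G\bigl(T_{L_w/K_v}+U_{L_w/K_v}-\delta_{L_w/K_v}(\scrL_w)\bigr)$$
in $K_0(\ZZ_p[G],\CC_p)$. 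The left-hand side I would decompose via the equivariant product formula for Artin epsilon constants, which factors $\epsilon_{L/K}$ as a product over all places of $K$; compatibility of $\widehat\partial^1$ with induction then converts this factorisation into a sum of induced local terms. Simultaneously, the global discriminant splits as $\delta_{L/K}(\scrL)=\sum_{v\text{ finite}}{\rm i}_{G_w}^G\delta_{L_w/K_v}(\scrL_w)$, and after projection to the $p$-part only the places above $p$ survive. This matches the local Galois Gauss sum term $T_{L_w/K_v}$ and the local discriminant on the right-hand side for $v\mid p$. The main technical obstacle is to verify that the remaining contributions coming from the archimedean places and the tame finite places away from $p$ combine, together with the ``unramified remainder'' at places above $p$, to give precisely $\sum_{v\mid p}{\rm i}_{G_w}^G U_{L_w/K_v}$. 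Here the plan is to use the defining property of the unramified term, namely Galois invariance together with vanishing under the scalar extension $K_0(\ZZ_p[G_w],\CC_p)\to K_0(\calO^t_p[G_w],\CC_p)$, which pins down $U_{L_w/K_v}$ uniquely as the element absorbing these residual contributions.

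Implications (i) and (iii) are then immediate from the identity: if all $R_{L_w/K_v}=0$ for $w\mid v\mid p$, then $T\Omega^{\rm loc}(L/K,1)_p=0$, giving (iii); ranging over all $p$ and applying the decomposition $K_0(\ZZ[G],\QQ)\simeq\bigoplus_p K_0(\ZZ_p[G],\QQ_p)$ gives (i).

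The delicate case is (ii). Given $E/F$ over $\QQ_p$ with $p\neq 2$, the strategy is to realise $E/F$ as a completion of a global Galois extension $L/K$ with the following properties: there is a unique place $v$ of $K$ above $p$, one has $K_v=F$, and the decomposition group at the unique $w\mid v$ is all of $G\simeq\Gal(E/F)$. Such $L/K$ can be produced by Grunwald--Wang (which has no exceptional case for odd $p$) combined with weak approximation, while also arranging $L/K$ to be tamely ramified at all other rational primes. Then $T\Omega^{\rm loc}(L/K,1)_q=0$ for $q\neq p$ by the known tame case of Breuning and Burns, and the main identity at $p$ collapses to ${\rm i}_G^G R_{E/F}=R_{E/F}=T\Omega^{\rm loc}(L/K,1)_p$, which vanishes by the hypothesis $\EPS_p(L/K)$. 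The failure of Grunwald--Wang for $p=2$ (Wang's counterexample) is precisely why (ii) is restricted to odd $p$.
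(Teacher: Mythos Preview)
The paper does not prove this theorem at all: its entire proof reads ``\cite[Thm.\ 4.1, Cor.\ 4.2 and Thm.\ 4.3]{Breu:LocEps}'', i.e.\ the result is quoted verbatim from Breuning. So there is no in-paper argument to compare against; your proposal is an attempt to reconstruct Breuning's proof.

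Your outline for the main identity and for (i), (iii) is along the right lines, but two points deserve comment. First, the treatment of $U_{L_w/K_v}$ is not really an argument: you say the defining property ``pins it down as the element absorbing these residual contributions'', but that is exactly the non-trivial computation one has to carry out---showing that the archimedean and away-from-$p$ contributions to $\scrE_{L/K,p}-\delta_{L/K}(\scrL)_p$ assemble to something satisfying Breuning's Galois-invariance and tame-vanishing characterisation. This is the heart of Breuning's Theorem~4.1 and cannot be absorbed into a sentence. Second, your decomposition of $\delta_{L/K}(\scrL)$ as a sum over \emph{all finite} places is not how the global discriminant is set up (it is defined via the archimedean comparison map $\rho_L$), so this step needs more care.

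For (ii) your core idea is correct and matches Breuning: realise $E/F$ globally via an approximation theorem so that the sum in the main identity has a single summand equal to $R_{E/F}$, and the restriction $p\neq 2$ is indeed forced by the Grunwald--Wang special case. However, your side requirement that $L/K$ be \emph{tamely ramified at all other rational primes} is both unachievable (Grunwald--Wang controls only finitely many places, not the complement) and unnecessary: the hypothesis is $\EPS_p(L/K)$ for all $L/K$, so you never need to know anything about $T\Omega^{\rm loc}(L/K,1)_q$ for $q\neq p$. Drop that clause and the argument for (ii) is clean.
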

\begin{proof}
\cite[Thm.\ 4.1, Cor.\ 4.2 and Thm.\ 4.3]{Breu:LocEps}.
\end{proof}

As a consequence, for $p\neq2$, parts (i) and (ii) imply the equivalence of the local
conjecture for extensions of $\QQ_p$ and the $p$-part of the global conjecture.

\subsection{An algorithm}
We recall the functorial properties of the global and local epsilon constant conjectures.

\begin{proposition}[Functorial property]
\label{prop:epsconj-functorial}
For a Galois extension $L/K$ of number fields with intermediate
field $F/K$
and a local Galois extension $M/N$ over $\QQ_p$
with intermediate field $E/N$ one has:\pagebreak[2]
\begin{enumerate}[(i)]
\item $\EPS(L/K) \Rightarrow \EPS(L/F)$
and $\EPS(L/K) \Rightarrow \EPS(F/K)$ if $F/K$ is Galois.
\item $\EPSloc(M/N) \Rightarrow \EPSloc(M/E)$
and $\EPSloc(M/N) \Rightarrow \EPSloc(E/N)$ if $E/N$ is Galois.
\end{enumerate}
\end{proposition}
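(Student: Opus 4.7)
The plan is to treat the local case (ii) first and then derive the global case (i) from it via the local-global principle of Theorem~\ref{thm:localglobal}.

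Part (ii) is the content of \cite[Prop.~3.3]{Breu:LocEps}, whose proof proceeds by a term-by-term verification. Set $D = \Gal(M/N)$. For an arbitrary intermediate $E$, one introduces the natural restriction map $\rho\colon K_0(\ZZ_p[D], \CC_p) \to K_0(\ZZ_p[\Gal(M/E)], \CC_p)$; for $E/N$ Galois, one uses the deflation map $K_0(\ZZ_p[D], \CC_p) \to K_0(\ZZ_p[D/\Gal(M/E)], \CC_p)$ induced by $\ZZ_p[D] \twoheadrightarrow \ZZ_p[D/\Gal(M/E)]$. The identity $R_{M/N} \mapsto R_{M/E}$, respectively $R_{M/N} \mapsto R_{E/N}$, is then checked separately on each of the four summands of $R$: for the local epsilon-constant term $T_{M/N}$ via the standard inductivity/deflation behavior of local Galois Gauss sums; for the cohomological term $C_{M/N}$ via the corresponding compatibility of refined Euler characteristics and local discriminants; and for the unramified term $U_{M/N}$ and the correction term $\widehat\partial^1(m_{M/N})$ by verifying that the characterizing properties of the former (Galois invariance plus vanishing after scalar extension to $\calO^t_p$) and the explicit Wedderburn-component formula \eqref{eq:def-correction} for the latter are preserved under both operations.

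For (i), I reduce to (ii) using the local-global formula
\[
T\Omega^{\rm loc}(L/K,1)_p = \sum_{v \mid p} {\rm i}_{G_w}^G (R_{L_w/K_v})
\]
from Theorem~\ref{thm:localglobal}. Let $H = \Gal(L/F) \leq G$. The global restriction map $\rho_H^G\colon K_0(\ZZ[G],\RR) \to K_0(\ZZ[H],\RR)$ is compatible with the decomposition~\eqref{eq:decomp}, and by a standard Mackey-type identity the composition $\rho_H^G \circ {\rm i}_{G_w}^G$ equals a sum of inductions ${\rm i}_{H_{w'}}^H$ indexed by the places $w'$ of $L$ lying above $v$ (equivalently, by double cosets $H \backslash G / G_w$), with corresponding places $u \mid v$ in $F$. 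Applying this to each $R_{L_w/K_v}$ and invoking the restriction clause of (ii) at every contribution gives $\rho_H^G(T\Omega^{\rm loc}(L/K,1)) = T\Omega^{\rm loc}(L/F,1)$, whence $\EPS(L/K) \Rightarrow \EPS(L/F)$. The implication $\EPS(L/K) \Rightarrow \EPS(F/K)$ under the hypothesis that $F/K$ is Galois is handled analogously, replacing $\rho_H^G$ by the deflation map along $G \twoheadrightarrow \Gal(F/K)$ and invoking the deflation clause of (ii) at each place.

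The main obstacle is the concrete identification of how the four constituent terms of $R_{L_w/K_v}$ transform under restriction and deflation on the relative $K$-group, particularly for the implicitly defined $U_{L_w/K_v}$ and for the correction term $m_w$, whose Wedderburn-component normalization (the $({}^*)$-construction) does not a priori commute cleanly with induction and deflation of characters. Since this verification has been carried out in detail by Breuning, our proof reduces to citing (ii) and performing the Mackey-type bookkeeping sketched above to assemble the local contributions into the global $T\Omega^{\rm loc}$.
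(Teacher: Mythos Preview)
Your treatment of part (ii) matches the paper's: both simply cite \cite[Prop.~3.3]{Breu:LocEps}. For part (i), however, the paper does \emph{not} derive the global functoriality from the local one; it cites \cite[Thm.~6.1]{BleyBur:Equiv} directly, where the global statement was proved term-by-term for $T\Omega^{\rm loc}(L/K,1)$ before the local conjecture had even been formulated. Your route via Theorem~\ref{thm:localglobal} and a Mackey decomposition is logically sound (Breuning indeed proves that restriction and deflation carry $R_{M/N}$ to $R_{M/E}$ and $R_{E/N}$ as \emph{elements}, which is what your argument needs), but it is more indirect and historically backwards. The paper's approach has the advantage of being a one-line citation to a result already established in the foundational reference; your approach has the conceptual appeal of showing that the global functoriality is a formal consequence of the local one together with the local--global formula, though the deflation--induction compatibility you invoke (``handled analogously'') does require a little care in matching places of $F$ with places of $L$.
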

\begin{proof}
\cite[Thm.~6.1]{BleyBur:Equiv} and
\cite[Prop.~3.3]{Breu:LocEps}.
\end{proof}

The functorial properties together with the known results mentioned so far imply the following corollary.
\begin{corollary}
\label{cor:locglob}
Let $n\in\NN$ be a fixed integer.
Then the local epsilon constant conjecture $\EPSloc(M/\QQ_p)$
for all extensions $M/\QQ_p$ of degree $[M:\QQ_p]\le n$
with $p\le n$ implies the
global epsilon constant conjecture $\EPS(F/K)$ for all
Galois extensions $F/K$ where $F$ can be embeded into
a Galois extension $L/\QQ$ of degree $[L:\QQ]\le n$.
\end{corollary}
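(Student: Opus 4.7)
The plan is to first use the functoriality of the epsilon constant conjecture to reduce the assertion $\EPS(F/K)$ to the a priori stronger assertion $\EPS(L/\QQ)$ for the fixed Galois closure $L/\QQ$ of degree $[L:\QQ]\le n$. Given the tower $\QQ\subseteq K\subseteq F\subseteq L$, Proposition~\ref{prop:epsconj-functorial}(i) applied twice yields
$\EPS(L/\QQ)\Rightarrow\EPS(L/K)\Rightarrow\EPS(F/K)$,
the second implication using that $F/K$ is Galois. So it suffices to prove $\EPS(L/\QQ)$.

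Next, via the decomposition \eqref{eq:decomp}, verifying $\EPS(L/\QQ)$ amounts to verifying its $p$-part $\EPS_p(L/\QQ)$ for every rational prime $p$. For each such $p$, Theorem~\ref{thm:localglobal}(iii) reduces this to checking the local conjecture $\EPSloc(L_w/\QQ_p)$ at every place $w$ of $L$ above $p$.

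The final step is a case distinction on $p$. If $p\le n$, then $[L_w:\QQ_p]$ divides $[L:\QQ]\le n$, so the local conjecture $\EPSloc(L_w/\QQ_p)$ is covered by the hypothesis. If $p>n$, then $[L_w:\QQ_p]\le n<p$, hence the degree $[L_w:\QQ_p]$ is coprime to $p$; in particular the ramification index at $w$ is coprime to $p$, so $L_w/\QQ_p$ is tamely ramified. Breuning's theorem on the tame case of the local epsilon constant conjecture (see \cite{Breu:LocEps}) therefore supplies $\EPSloc(L_w/\QQ_p)$. Combining the two cases gives $\EPS_p(L/\QQ)$ for all $p$, hence $\EPS(L/\QQ)$, hence $\EPS(F/K)$ as required.

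There is no serious obstacle here: the argument is a bookkeeping reduction built on tools already stated in the paper. The only substantive observation is that the bound $n$ simultaneously controls the local degrees at small primes (where the hypothesis is invoked) and forces tameness at large primes (where the known tame case applies), so that no third regime ever arises.
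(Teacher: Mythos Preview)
Your proof is correct and follows essentially the same chain of implications as the paper's own proof (tame case for $p>n$, Theorem~\ref{thm:localglobal}(iii), the decomposition~\eqref{eq:decomp}, and Proposition~\ref{prop:epsconj-functorial}), only presented top-down as a sequence of reductions rather than bottom-up as a sequence of deductions. One cosmetic point: $L$ is merely assumed to be a Galois extension of $\QQ$ containing $F$, not necessarily the Galois closure of $F/\QQ$, but this does not affect your argument.
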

\begin{proof}
All extensions below are assumed to be Galois. We conclude
\vspace{0.5cm}

\begin{tabular}{rlll}
              & $\EPSloc(M/\QQ_p)$ & $\forall [M:\QQ_p]\le n, p\le n$ \\[.3em]
$\Rightarrow\!\!\!\!$ & $\EPSloc(M/\QQ_p)$ & $\forall [M:\QQ_p]\le n, \forall p$ 
 & (since   $\EPSloc(M/\QQ_p)$ is  \\[.3em]
&&& valid for tame extensions) \\[.3em]
$\Rightarrow\!\!\!\!$ & $\EPS_p(L/\QQ)$    & $\forall [L:\QQ]\le n, \forall p$ 
 & (by \autoref{thm:localglobal} (iii)) \\[.3em]
$\Rightarrow\!\!\!\!$ & $\EPS(L/\QQ)$      & $\forall [L:\QQ]\le n$
 & (by decomposition \eqref{eq:decomp}) \\[.3em]
$\Rightarrow\!\!\!\!$ & $\EPS(F/K)$       & $\forall F\subseteq L, [L:\QQ]\le n$
 & (by \autoref{prop:epsconj-functorial})
\end{tabular}\\
\hspace*{2cm}
\end{proof}

It is well-known that for fixed $p$ and $n$ there are only finitely
many Galois extensions $M/\QQ_p$ with degree $[M:\QQ_p]= n$.
So the local conjecture for finitely many extensions implies the
global conjecture for an infinite number of extensions.
And these finitely many local extensions can be handled algorithmically:

\begin{enumerate}
\item For a fixed positive  integer $n$, compute for all $p \le n $
all local Galois extensions of $\QQ_p$ of degree $\le n$.
This can be done using an algorithm due to Pauli and Roblot \cite{PauliRoblot}
which performs well enough up to degree 15.
However, we were not able to compute all local extensions
of degree 16 of $\QQ_2$.

\item For every local extension $M/\QQ_p$, 
find a global Galois extension $L/K$ of number fields with places $w|v$,
such that $L_w=M$, $K_v=\QQ_p$ and $[L:K]=[M:\QQ_p]$.
Such an extensions $L/K$ is called a \emph{global representation} for $M/\QQ_p$
and is needed to do exact computations in step (3).

\item Apply the algorithm of Breuning and the first named  author \cite{BleyBreu:ExactAlgo} to
prove or disprove  the local epsilon constant conjecture for these extensions.
\end{enumerate}

In the next section we will discuss how step (2) can be handled.
Afterwards, we recall the algorithm of \cite{BleyBreu:ExactAlgo}, and finally, we 
present our algorithmic results and their consequences.

%%%%%%%%%%%%%%%%%%%%%%%%%%%%%%%%%%%% 1.August 2011, 19 Uhr %%%%%%%%%%%%%%%%%%%%%%%%%%%%%%%%%%%%%

\section{Global representations of local Galois extensions}
\label{sec:globalrep}
To do exact computations for a fixed Galois extension $M/\QQ_p$
in the algorithm of Breuning and the first named  author,
we will need a global Galois extension $L/K$ of number fields
with corresponding primes $\frP|\frp$ for which $K_\frp=\QQ_p$ and $L_\frP=M$.
Such an extension $L/K$ will be called global representation for $M/\QQ_p$
and is denoted by $(L,\frP)/(K,\frp)$.

The proof of the existence of such a global representation
involves the Galois closure of a number field
\cite[Lem.~2.1 and~2.2]{BleyBreu:ExactAlgo},
but for computational reasons we need a representation which has
small degree over $\QQ$, or even better, with $K=\QQ$.

Henniart shows in \cite{Henniart} that a global representation
$L/K$ for the local extension $M/\QQ_p$ exists with $K=\QQ$
if $p\neq 2$. And if $p=2$, there exists a global representation
with $K$ quadratic over $\QQ$.
Unfortunately, it is not clear how to find these small representations
algorithmically.
We therefore present some heuristics.

\subsection{Search database of Klüners and Malle}
The database of Klüners and Malle \cite{KM} contains
polynomials generating Galois extensions of $\QQ$
for all subgroups $G$ of permutation groups $S_n$
up to degree $n=15$.
In particular, the database contains polynomials
for all Galois groups of order $n\le 15$.
Among those one will often find a polynomial
generating a global representation for $M/\QQ_p$,
if $[M:\QQ_p]\le 15$.

\subsection{Parametric polynomials}
%In this context
Here we consider polynomials $f\in K(t_1,\ldots, t_n)[x]$
with indeterminates $t_i$ over a field $K$.
Such a polynomial $f$ is said to be \emph{parametric} for a given group $G$,
if the splitting field $L$ of $f$ is a Galois extension of $K(t_1,\ldots, t_n)$ with group isomorphic to $G$
and, moreover, if for every Galois extension $N/K$ with $\Gal(N/K) \simeq G$ there
exist parameters $\alpha_1, \ldots, \alpha_n \in K$ such that the splitting field
of $f(\alpha_1, \ldots, \alpha_n)[x] \in K[x]$ is isomorphic to $N$.
Since $K$ is countable, one can systematically enumerate those
polynomials $f$ and one will eventually find a polynomial whose splitting field
is a global representation for $M$ (provided it exists).
In our applications, we could find such a polynomial $f$ by
randomly testing different values for the indeterminates $t_i$.

The book \cite{genpols} by Jensen et.\ al.\ contains parametric polynomials
(or methods to construct them) for a lot of groups.
In particular, it contains polynomials for all
non-abelian groups of order $\le 15$, except for the
generalized quaternion group $Q_{12}$ of order $12$.
However, there do not exist parametric polynomials for all groups.
The smallest group for which the non-existence is proved
is the cyclic group of order $8$ \cite[§\,2.6]{genpols}.

\subsection{Class field theory}
As a last heuristic, we will use class field theory to construct
abelian extensions with prescribed ramification.%
\footnote{Thanks to Jürgen Klüners for suggesting the application of this method.}
A discussion of class field theoretic algorithms implemented in \textsc{Magma}
is given by Fieker in~\cite{Fieker:CFT}. For the general theory we refer the reader
to \cite[Ch.~VI]{Neu:AlgZThEn}.

If $L/K$ denotes an abelian extension of number fields with conductor $\frf$, then
$\frp|\frf$ if and only if $\frp$ is ramified in $L/K$ and, moreover,
$\frp^2|\frf$ if and only if $\frp$ is wildly ramified in $L/K$,
cf.\ \cite[§~2.4, p.\,44]{Fieker:CFT}.%\cite[VI (6.6)]{Neu:AlgZThEn}.

One can therefore possibly find abelian extensions of $K$ with prescribed
ramification at certain places by choosing an appropriate modulus $\frf$,
constructing the corresponding ray class field, and
computing suitable subfields of the requested degree.

\subsection{Global representations for extensions up to degree 15}
\label{sec:glrep-results}

Let $M/\QQ_p$ be a Galois extension of local fields with group $G$.
For the computation of the unramified characteristic (see (\ref{eq:unramified-term})) 
we will also have to consider
the unramified extension $N_f$ of $\QQ_p$ of degree $f=\exp(G^{\rm ab})$,
where $f$ denotes the exponent of the abelianization $G^{\rm ab}$ of $G$.
Note that for Algorithm \ref{alg:lfc} we do not need global representations of
the fields used in the algorithm. 

Since the local conjecture is known to be valid for tamely ramified extensions
and abelian extensions of $\QQ_p$, $p\neq2$, it suffices to  discuss the
performance of the heuristic methods in the following cases:
\begin{enumerate}[\quad(a)]
\item wildly ramified extensions $M$ of $\QQ_p$ with non-abelian Galois group $G$ for all primes $p$,
\item wildly ramified extensions $M$ of $\QQ_2$, with abelian Galois group $G$, and
\item unramified extensions of $\QQ_p$ of degree $f=\exp(G^{\rm ab})$ for all primes $p$.
\end{enumerate}
In all of these cases we restrict to extensions of degree $\le15$ since
for degree 16 we cannot compute all extensions of $\QQ_2$.
The hypothesis of wild ramification implies that we only have to consider
primes $p=2,3,5$ and $7$.
The primes $11$ and $13$ are not considered because they can only occur
(up to degree $\le15$) in abelian extensions of degree $11$ and $13$,
for which both the local and global epsilon conjecture is known to be true.

\subsubsection{Case (a)}
First consider extensions with \emph{non-abelian} Galois group.
For most of the non-abelian wildly-ramified local extensions
we found polynomials of the appropriate degree in the database \cite{KM}
generating a global representation.
In fact, there were just three $D_4$--extensions of $\QQ_2$
and three $D_7$--extensions of $\QQ_7$ 
not being represented by any polynomial (of degree 8 or 14 respectively)
in this database.

By \cite[Cor.~2.2.8]{genpols} every $D_4$--extension of $\QQ$
is the splitting field of a polynomial
$f(x)=x^4-2st x^2 + s^2t(t-1)\in \QQ[x]$
with suitable $s,t\in\QQ$.
Experimenting with small integers $s$ and $t$ and computing the
splitting field of $f$ quickly provides global
representations for all $D_4$--extensions of $\QQ_2$.

Finally, we used class field theory to construct
global Galois representations for the three non-isomorphic
$D_7$--extensions of $\QQ_7$:
by taking quadratic extensions $K$ of $\QQ$ which are 
non-split at $p=7$ and computing all
$C_7$--extensions of $K$ which are subfields of the ray class field 
$K^\frm$, $\frm=49\calO_K$,
one finds $D_7$--extensions where $p=7$ is ramified 
with ramification index $7$ or $14$ and where $p$ does not split.
Experimenting with different fields $K$ as above one
finds global Galois representations for all three
$D_7$--extensions of $\QQ_7$.

This completes the construction of global representations
for all non-abelian wildly ramified local extensions 
of $\QQ_p$, $p=2,3,5,7$, up to degree 15.

\subsubsection{Case (b)}
Using the database \cite{KM}
we can again find polynomials for all abelian extensions
over $\QQ_2$ of degree $\le 7$.
For extensions of higher degree, the heuristics were not
as successful.
But to obtain a \emph{global} result up to degree 15,
it is sufficient to consider abelian extension
of $\QQ_2$ of degree $\le 7$ (see the proof of \autoref{cor:globeps}).

\subsubsection{Case (c)}
For each of the pairs $(L/\QQ, p)$ with Galois group $G$
constructed in cases (a) and (b),
\autoref{alg:locepsconst} also needs
a extension $N$ of $\QQ$ which is unramified and non-split at $p$
and is of degree $f=\exp(G^{\rm ab})$.

For non-abelian extensions of degree $\le 15$
the maximum degree of $N$ can easily be determined to be $f=4$.
And in the abelian case, we need unramified extensions of degree $\le 7$.

Most of these unramified extensions can be constructed
as a subfield of a cyclotomic field $\QQ(\zeta_n)$
generated by an $n$-th root of unity $\zeta_n$.
In the other cases one finds global representations
using the database \cite{KM}.
\bigskip

A complete list of polynomials which were found using these
heuristics is contained in the second named author's dissertation \cite{MyPhD}.

\section{Algorithmic proof of the local epsilon constant conjecture}
\label{sec:algoproof}
We briefly recall the algorithm described 
by Breuning and the first named  author in \cite[§4.2]{BleyBreu:ExactAlgo}.
There the authors explain in
detail how each of the terms in the local conjecture can be computed
and how this results in an algorithmic proof of the local conjecture
for a given local Galois extension $L_w/K_v$.
Since by the functorial properties of the local conjecture
one has $\EPSloc(L_w/\QQ_p)\Rightarrow \EPSloc(L_w/K_v)$,
we will only consider extensions $L_w/\QQ_p$.

For the rest of this section,
fix the Galois extensions $L/K$ and $N/K$ and a prime $\frp$ of $K$
as the input of the algorithm. We assume that $L/K$ (resp. $N/K$) is
a global representation of $L_w/\QQ_p$ (resp. the unramified extension of $\QQ_p$ of
degree $f := \exp(G^\mathrm{ab})$).
For simplicity, the unique prime ideal above $\frp$ 
in the fields $L$, $N$, or any subextension of $L/K$
will also be denoted by $\frp$.
If it is necessary to avoid confusion, we will write $\frp_K$, $\frp_L$
and $\frp_N$.
Furthermore, we will identify the ideals $\frp_L|\frp_K$
with places $w|v$ of $L$ and $K$, respectively, such that $L_w=L_\frp$
and $K_v=K_\frp$. We write $e_{w|v} = e(L_w/\QQ_p)$ for the ramification index.
Recall that for a finite place $w$ of $L$ we write $w : L^\times \longrightarrow \ZZ$ for
the normalized valuation associated with $w$ (or $\frp_L$).

We will first recall the complete algorithm of \cite{BleyBreu:ExactAlgo}
and then explain each step. In step 2 we will construct a big number field $E$ which, among
other things, is a splitting field for $G$. Hence the Wedderburn decomposition of $E[G]$ induces a
canonical isomorphism $\Zent(E[G])^\times \simeq \prod_{\chi \in \Irr(G)} E^\times$.

\pagebreak[3]

\begin{algorithm}[Proof of the local epsilon constant conjecture]
\rm\hspace*{1cm}%\\\noindent
\label{alg:locepsconst}
\Input{An extension $(L,\frP)/(K,\frp)$ with $K_\frp=\QQ_p$
in which $L/K$ is Galois with group $G$
and a Galois extension $N/K$ of degree $\exp(G^{\rm ab})$
in which $\frp$ is non-split and unramified.}
\Output{True if $\EPSloc(L_\frP/\QQ_p)$ was successfully checked.}
\begin{enumAlgo}
\vspace{-.5em}
\algpart{Construction of the coefficient field}
\item Compute all characters $\chi$ of $G$ and use
\index{Brauer induction}
Brauer induction to find an integer $t$
such that the Galois Gauss sums can be computed in $\QQ(\zeta_m,\zeta_{p^t})$,
$m=\exp(G^{})$ (cf. \cite[Rem.~2.7]{BleyBreu:ExactAlgo}).
\item Construct the composite field $E$ of $L, N$ and  $\QQ(\zeta_m,\zeta_{p^t})$ and 
fix a complex embedding $\iota:E\hookrightarrow\CC$
and a prime ideal $\frQ$ of $E$ above $p$.

\vspace{-.3em}
\algpart{Computation of cohomological term}
\vspace{-.2em}
\item Let $\theta\in L$ be a generator of a normal basis of $L/K$
with $w(\theta)> \frac{e(L_w/\QQ_p)}{p-1}$,
define $\scrL=\ZZ_p[G]\theta\in \calO_{L_w}$
and compute $k$ such that $(\frP \calO_{L_w})^k \subseteq \scrL$ (cf. \cite[Sec.~4.2.3]{BleyBreu:ExactAlgo}).
\item Compute a cocycle representing the local fundamental class
up to precision $k$ in $\TateH^2(G,L_w^\times/U_{L_w}^{(k)})$ 
and its projection 
onto $\TateH^2(G,L_w^\times/\exp_v(\scrL_w))$ (cf. \autoref{alg:lfc}).
\item Construct a complex representing this cocycle by \cite[p.~115]{NSW:00}
and compute the Euler characteristic $E_w(\exp_v(\scrL_w))\in K_0(\ZZ[G], \QQ)$
as in \cite[§4.2.4]{BleyBreu:ExactAlgo}.

\vspace{-.4em}
\algpart{Computation of the terms in  $\prod_\chi E^\times$}
\vspace{-.2em}
\item Compute the correction term 
$m_{L_\frP/\QQ_p}\!=m_{w}\!\in \Zent(\QQ[G])^\times\!\subseteq \Zent(E[G])^\times\!\simeq \prod_\chi E^\times\!$
defined in \eqref{eq:def-correction}.
\item Compute the element
$d_{L_\frP/\QQ_p}\in L[G]^\times\subseteq E[G]^\times$
of \eqref{eq:equiv-discriminant} below, such that $\nr(d_{L_\frP/\QQ_p})$ represents 
the equivariant discriminant $\delta_{L_\frP/\QQ_p}(\scrL)$.
%defined in \eqref{eq:def-local-discriminant}.
\item Compute the element $u_{L_\frP/\QQ_p}\in N[G]^\times\subseteq E[G]^\times$
using \eqref{eq:unramified-term} below, such that $\nr(u_{L_\frP/\QQ_p})$ represents the unramified term
$U_{L_\frP/\QQ_p}$.
\item Use the
canonical homomorphism $E[G]^\times\rightarrow K_1(E[G])$,
\index{reduced norm}%
the reduced norm map $\nr: K_1(E[G])\rightarrow \Zent(E[G])$
\index{Wedderburn decomposition}%
and Wedderburn decomposition of $\Zent(E[G])$
to represent these two terms in $\prod_\chi \!E^\times$.
\item Compute the equivariant epsilon constant
$\tau_{L_\frP/\QQ_p}\in\prod_\chi \QQ(\zeta_{p^t},\zeta_m)^\times\subseteq \prod_\chi E^\times$
via Galois Gauss sums.

\pagebreak[3]
\vspace{-.5em}
\algpart{Computations in relative $K$-groups}
\item Read $E_w(\exp_v(\scrL_w))$ and the tuples from above 
as elements in $K_0(\ZZ_p[G], E_\frQ)$.
\item Compute the sum  $R_{L_\frP/\QQ_p}\in K_0(\ZZ_p[G], E_\frQ)$ of the resulting elements.
\end{enumAlgo}
\Return{True if $R_{L_\frP/\QQ_p}$ is zero, and false otherwise.}
\end{algorithm}
\begin{proof}
\cite[§4.2]{BleyBreu:ExactAlgo}.
\end{proof}
\pagebreak[3]

All steps were explained in detail in \cite{BleyBreu:ExactAlgo}.
However, there were some problems that needed further
improvements to give a practical algorithm.
Firstly, the existence of global representations is due to a
theoretical argument by Henniart in \cite{Henniart}
which we still cannot make explicit.
For the construction of these representations we gave some
heuristics in the previous section which we successfully
applied to extensions of small degree.
%, but we could successfully
%find solutions using some heuristics.
Secondly, the computation of local fundamental classes
as presented in \cite[§\,2.4]{BleyBreu:ExactAlgo}
is not very efficient and is significantly improved by
\autoref{alg:lfc}.
And thirdly, Wilson and the first named author \cite{BleyWilson} developed new algorithms 
for computations in the relative algebraic $K$-groups $K_0(\ZZ_p[G], \QQ_p)$.

Below we will discuss each part of the algorithm separately.

\subsection{Constructing the coefficient field}
\label{sec:lec-coefffield}
As explained in \cite[§\,4.2.2]{BleyBreu:ExactAlgo}
we need to construct a global field $E$, in which all the computations
take place.

For the computation of the unramified term, we will need a
cyclic extension $N/K$ which is unramified and non-split at $\frp$.

Another extension involved is $\QQ(\zeta_m, \zeta_{p^t})$,
where $m$ is the exponent of $G^{}$ and $t$ is computed as
described below.
By Brauer's theorem \cite[Sec.~12.3, Theorem 24]{Serre:Rep} the field $\QQ(\zeta_m)$
is a splitting field for all irreducible characters of $G$ and therefore 
contains all character  values.
The root of unity $\zeta_{p^t}$ is used to represent
Galois Gauss sums and the integer $t$ is determined as follows.

For each character $\chi$ of $G$
one computes subgroups $H$, linear characters $\phi$ of $H$,
and coefficients $c_{(H,\phi)}\in\ZZ$
such that $\chi-\chi(1) 1_G=\sum_{(H,\phi)} c_{(H,\phi)} \ind_H^G(\phi-1_H)$.
Such a relation exists by 
Brauer's induction theorem, cf.\
\cite[§\,2.5]{BleyBreu:ExactAlgo}.
If $\frf(\phi)$ denotes the \emph{Artin conductor} of $\phi$ and
$e$ the ramification index of $(L^H)_\frp/\QQ_p$,
then $t$ must satisfy $t\ge v_\frp(\frf(\phi))/e$
for all pairs $(H,\phi)$ and all $\chi$.
This choice of $t$ will allow 
to compute the epsilon constants as elements of $\QQ(\zeta_m, \zeta_{p^t})$,
see also \cite[Rem.~2.7]{BleyBreu:ExactAlgo}.

The composite field of the three fields $L,N$ and $\QQ(\zeta_m, \zeta_{p^t})$
is denoted by $E$, giving the following situation: %which may not be Galois over $\QQ$
%\vskip-1.5em
\begin{equation}
\label{diag:fieldsituation}
\begin{tikzpicture}[default]
\matrix (m) [mtrx,flds]
{
  &[-2ex] E \\
\QQ(\zeta_m,\zeta_{p^t})\hspace{-1ex} & L &[.5ex] \hspace{-1ex}N \\
  & \QQ \\
};
\path[-]
(m-3-2) edge (m-2-1)
        edge (m-2-2)
        edge (m-2-3)
(m-1-2) edge (m-2-1)
        edge (m-2-2)
        edge (m-2-3)
;
\end{tikzpicture}
\end{equation}

\pagebreak[3]
%\vskip-.5em
We then fix a complex embedding $\iota: E\hookrightarrow \CC$.
The embedding $\iota$ is essential because some of the terms
in the conjecture depend on 
the particular choice of the embedding: for example,
the definition of the standard additive character below,
see also \cite[§\,2.5]{BleyBreu:ExactAlgo}.
%a value in $\CC$.
So once we
compute an algebraic element representing this value,
we have to %make sure to
maintain its embedding into $\CC$.
Since we still try to avoid computations in such a big field $E$,
this implies the following:
whenever we do calculations in a subfield $F\subseteq E$,
we have to choose embeddings $\iota_1: F\hookrightarrow \CC$
and $\iota_2:F\hookrightarrow E$ such that
the diagram
\begin{equation*}
\begin{tikzpicture}[default]
\matrix (m) [mtrx,sizeM]
{
E &[1ex] \CC \\[.5ex]
F \\
};
\path[right hook->]
(m-1-1) edge node[ar]{$\iota$} (m-1-2)
(m-2-1) edge node[left]{$\iota_2$} (m-1-1)
(m-2-1) edge node[below right,shift={(-0.1,0.1)}]{$\iota_1$}(m-1-2)
;
\end{tikzpicture}
\end{equation*}
%\vskip-1em\noindent
is commutative, i.e.\ $\iota_1=\iota \circ \iota_2$.

We also fix a prime ideal $\frQ$ of $E$ above $p$
and an embedding $E\hookrightarrow E_\frQ$ such that
$E\hookrightarrow E_\frQ \hookrightarrow \CC_p$
and $E\stackrel\iota\hookrightarrow \CC \hookrightarrow \CC_p$
coincide.
Then all the invariants appearing in the conjecture lie in the
subgroup $K_0(\ZZ_p[G], E_\frQ)$ of $K_0(\ZZ_p[G], \CC_p)$
and they can therefore be represented by tuples in
$\Zent(E_\frQ[G])\simeq \prod_{\chi\in \Irr(G)} E_\frQ^\times$.
In fact, we will see that all these elements are already represented by
elements in $\prod_{\chi\in \Irr(G)} E^\times$
and can therefore be computed globally.

\subsection{Computation of  the cohomological term}
\label{sec:lec-cohomterm}
The lattice $\scrL=\ZZ[G]\theta\subseteq \calO_L$ is computed using
a normal basis element $\theta$ for $L/K$ and 
the integer $k$ for which $\frp^k\subseteq \scrL$
can then be found experimentally. The details are explained in 
\cite[§\,4.2.3]{BleyBreu:ExactAlgo}).

We compute a cocycle
$\gamma \in Z^2(G,L_w^\times/U_{L_w}^{(k)})$
representing the local fundamental class up to precision $k$
using \autoref{alg:lfc} and then the projection of $\gamma$ onto 
$Z^2(G,L_w^\times/\exp_v(\scrL_w))$. Note that the quotient $L_w^f := L_w^\times/\exp_v(\scrL_w)$
can be computed globally, cf. \cite[Rem.~3.6]{Bley:NumEv}.
We can then construct the corresponding complex
$P_w=\big[L_w^f(\gamma) \rightarrow \ZZ[G]\big]$
using the splitting module $L_w^f(\gamma)$
of \cite[Chp.~III, §\,1, p.~115]{NSW:00}
and the Euler characteristic $E_w(\exp_v(\scrL_w))\in K_0(\ZZ[G], \QQ)$
using the explicit construction of \cite[§\,4.2.4]{BleyBreu:ExactAlgo}.

\subsection{Computation of the terms in $\prod_\chi E^\times$}
\label{sec:lec-terms}
The correction term $m_w$ is explicitly defined as a tuple in $\prod_\chi E^\times$
by \eqref{eq:def-correction}.
For the equivariant discriminant and the unramified term
we recall the following formulas from \cite[§§\,4.2.5 and 4.2.7]{BleyBreu:ExactAlgo}:
\begin{align}
\label{eq:equiv-discriminant}
d_{L_w/\QQ_p} &=
\sum_{\tau\in G} \tau(\theta) \tau^{-1}
\in L[G]^\times\subseteq E[G]^\times, \\
\label{eq:unramified-term}
u_{L_w/\QQ_p} &=
\sum_{i=0}^{s-1} \frob_\frp^i(\xi) \sigma^{-i}
\in N[G]^\times\subseteq E[G]^\times.
\end{align}
We explain the notation in the latter equation. The element $\sigma \in G$ 
is a lift of the local norm residue symbol
$(p, F_\frp/K_\frp)\in \Gal(F_\frp/K_\frp)\simeq\Gal(F/K)$
with $F$ being the maximal abelian subextension in $L/K$.
An algorithm to compute local norm residue symbols is
described in \cite[Alg.~3.1]{Klue:Artin}.
If $s$ denotes the order of $\sigma$ and $N_1\subseteq N$
is the subextension of $N/K$ of degree $[N_1:K]=s$,
then we denote the Frobenius automorphism of $N_1/K$
with respect to $\frp$ by $\frob_\frp\in\Gal(N_1/K)$,
and finally, $\xi\in\calO_{N_1}$ is an integral normal basis element  
for the extension $(N_1)_\frp/K_\frp$.

% Hereby, $\frob_\frp$ denotes the Frobenius automorphism of $N/K$
% with respect to $\frp$,
% $\xi\in\calO_N$ is an integral normal basis element
% for $N_\frp/K_\frp$,
% and $\sigma \in G$ is a lift of the local norm residue symbol
% $(p, F_\frp/K_\frp)\in \Gal(F_\frp/K_\frp)\simeq\Gal(F/K)$
% where $F$ is the maximal abelian subextension in $L/K$.

By \cite[Sec.~4.2.5 and 4.2.7]{BleyBreu:ExactAlgo} the equivariant discriminant and
the unramified term are represented by $\nr(d_{L_w/\QQ_p})$ and $\nr(u_{L_w/\QQ_p})$ 
as elements in   $\Zent(E[G])^\times \simeq \prod_{\chi \in \Irr(G)} E^\times$.
Recall that reduced norms can be computed by one of the algorithms described in
\cite[Sec.~3.2]{BleyBreu:ExactAlgo} or \cite[Sec.~3.3]{BleyWilson}.

%These group ring elements provide elements in $K_1(\CC_p[G])$
%through the homomorphism $E[G]^\times\rightarrow K_1(\CC_p[G])$
%by $E[G]\subseteq E_\frQ[G]\subseteq \CC_p[G]$.
%The element $u_{L_w/\QQ_p}\in N[G]$ represents
%the unramified term by the proof of \cite[Prop.~2.12]{Breu:LocEps}
%and $d_{L_w/\QQ_p}\in L[G]$ represents the equivariant discriminant 
%through the surjective homomorphism $\partial^1:K_1(\CC_p[G])\rightarrow K_0(\ZZ_p[G], \CC_p[G])$
%by \cite[§\,4.2.5]{BleyBreu:ExactAlgo}.

%Using
%\index{reduced norm}%
%the reduced norm map $\nr:K_1(E[G])\hookrightarrow \Zent(E[G])^\times$
%one obtains elements in $\Zent(E[G])^\times$ 
%and by the
%\emph{Wedderburn decomposition} $\Zent(E[G])^\times\simeq \prod_\chi E^\times$
%the equivariant discriminant and the unramified term
%are finally represented by tuples in
%$\prod_{\chi\in\Irr(G)} E^\times\subset \prod_{\chi\in\Irr(G)} E_\frQ^\times$.

% 
% tuples in $\prod_\chi E$ through the homomorphism
% $E[G]^\times\rightarrow K_1(E[G])$,
% %\TODO{$E[G]^\times\rightarrow K_1(E[G])$ explizit}
% \index{reduced norm}
% the reduced norm map $\nr:K_1(E[G])\hookrightarrow \Zent(E[G])^\times$,
% and the \emphIdx{Wedderburn decomposition} $\Zent(E[G])^\times\simeq \prod_\chi E^\times$.
% % Since $d_{L_\frp}$ and $u_{L_\frp}$ are invertible,
% % all these images are in $\prod_\chi E^\times$.

\bigskip

The equivariant epsilon constant $\tau_{L_\frp/\QQ_p}$
is computed in $\prod_\chi E^\times$ by \emph{local Galois Gauss sums}
as follows.

\label{sec:epsilon-constant}
For each $\chi$,
we have already computed subgroups $H$ of $G$, linear characters $\phi$ of $H$,
and coefficients $c_{(H,\phi)}\in\ZZ$ such that 
$\chi-\chi(1) 1_G=\sum_{(H,\phi)} c_{(H,\phi)} \ind_H^G(\phi-1_H)$.
Since Galois Gauss sums are additive, inductive in degree $0$, and equal to $1$
for the trivial character, we obtain 
\[
\tau(L_\frp/\QQ_p,\chi) = \prod_{(H,\phi)}
\tau\big((L^{\ker(\phi)})_\frp/(L^H)_\frp, \phi\big)^{c_{(H,\phi)}}
\in \QQ(\zeta_m,\zeta_{p^t})\subseteq E^\times.
\]
For the completions at $\frp$  of the abelian extension $M=L^{\ker(\phi)}$ over $N=L^H$,
Galois Gauss sums are given by the formula
\[
\tau(M_\frp/N_\frp,\phi) =
\sum_{x}
\phi\!\left(\left( \frac{x}{c}, M_\frp/N_\frp \right)\right)
\psi_{N_\frp}\!\!\left(\frac{x}{c}\right)
\in \QQ(\zeta_m,\zeta_{p^t})\subseteq  E^\times
\]
%\vskip-.2em\noindent
where $x$ runs through a system of representatives of
$\calO_{N_\frp}^\times / U_{N_\frp}^{(s)}\simeq (\calO_N/\frp^s)^\times$,
$s$ is the valuation $v_\frp(\frf(\phi))$
of the \emph{Artin conductor} $\frf(\phi)$ of $\phi$,
$c\in N$ generates the ideal $\frf(\phi)\calD_{N_\frp}$,
where $\calD_{N_\frp}$ denotes the \emph{different} of the extension $N_\frp/\QQ_p$,
and $\psi_{N_\frp}$ is the \emph{standard additive character} of $N_\frp$.

The above formulas allow the construction of the equivariant
epsilon constant as a tuple
$\tau_{L_\frp/\QQ_p}=\big(  \tau(L_\frp/\QQ_p, \chi) \big)_\chi\in \prod_\chi E^\times$.
For details see \cite[§\,2.5]{BleyBreu:ExactAlgo}.

\subsubsection{Computations in relative $K$-groups}
\label{sec:lec-relKgroup}
In the following we have to combine the computations
of the previous steps to find $R_{L_\frp/\QQ_p}$ and show
that its sum represents zero in $K_0(\ZZ_p[G],E_\frQ)$.
In \cite{BleyWilson} Wilson and the first named  author describe the relative $K$-group
as an abstract group. Using their methods it will be
clear how to read elements of the form 
$\widehat\partial^1_{G_w,\QQ_p}(x)$ for $x\in\prod_\chi E^\times$
and triples $[A,\theta,B]$ in the group
$K_0(\ZZ_p[G],E_\frQ)$.

We will recall the algorithms of 
\cite{BleyWilson} and
--- since they are not yet implemented in full generality ---
we will discuss a simple modification for special extensions $F$ of $\QQ$ which are
totally split at the fixed prime $p$.

First we introduce some more notation:
Let $K$ be a number field and $G$ a finite group. Let $\Irr_K(G) = \{\chi_1, \ldots, \chi_r\}$ 
denote a set of
orbit representatives of $\Irr(G)$ modulo the action of $\Gal(K^c/K)$.
Then the  \emph{Wedderburn decomposition} of $K[G]$ induces a decomposition
of its center $C:=\Zent(K[G])$ into character fields $K_i :=K(\chi_i), i = 1, \ldots, r$, so that we have
$C=\bigoplus_{i=1}^r K_i$.

Choose a maximal $\calO_K$-order $\calM$ of $K[G]$
containing $\calO_K[G]$
and a two-sided ideal $\frf$ of $\calM$ %-- called the conductor --
which is contained in $\calO_K[G]$ (e.g.\ $\frf=|G|\,\calM$)
and define $\frg:=\calO_C \cap \frf$.
Then the decomposition of $C$ similarly splits $\calM$ into $\bigoplus_{i=1}^r \calM_i$
and the ideals $\frf$ and $\frg$ into ideals $\frf_i$ of $\calM_i$
and $\frg_i$ of $\calO_{K_i}$.
For a prime $\frp$ in $\calO_K$, we further write $C_\frp$ for the localization
$C_\frp=K_\frp \otimes_\QQ C=\bigoplus_{i=1}^r K_\frp \otimes_\QQ K_i
=\bigoplus_{i=1}^r \bigoplus_{\frP|\frp} (K_i)_\frP$,
and $\fra_{i,\frp}$ for the part of an ideal $\fra_i$ of $\calO_{K_i}$ above $\frp$.

The reduced norm map induces a homomorphism
$\mu_\frp: K_1(\calO_{K_\frp}[G]/\frf_\frp)
\rightarrow $\linebreak $\bigoplus_{i=1}^r ( \calO_{K_i} / \frg_{i,\frp} )^\times$
whose cokernel is used %to describe 
in the description of
the relative $K$-group $K_0(\calO_{K_\frp}[G], K_\frp)$.
Then the main algorithmic result of Wilson and the first named  author is the following.

\pagebreak[3]
\begin{proposition}
\label{prop:K0Rel-iso}
There are isomorphisms
\[
K_0(\calO_{K_\frp}[G], K_\frp) \stackrel{\bar n}{\longrightarrow} C_\frp^\times / \nr(\calO_{K_\frp}[G]^\times)
\stackrel{\bar \varphi}{\longrightarrow} I(C_\frp) \times \coker(\mu_\frp),
\]
$\bar n$ being the natural isomorphism of \cite[Th.~2.2(ii)]{BleyWilson} 
and $\bar\varphi$ being induced by
\begin{equation}
\label{eq:varphi}
\begin{array}{cccc}
\varphi: & C_\frp^\times=\displaystyle\bigoplus_{i=1}^r\, (K_i)_\frp & \longrightarrow & I(C_\frp) \times \displaystyle\bigoplus_{i=1}^r\, (\calO_{K_i}/\frg_{i,\frp})^\times \\
& (\nu_1,\ldots,\nu_r) & \longmapsto & \left( \big( \prod_\frP \frP^{v_\frP(\nu_i)} \big)_{\!i}, (\bar\mu_1,\ldots,\bar\mu_r)  \right),
\end{array}
\end{equation}
where $\mu_i:=\nu_i \prod_\frP \pi_{i,\frP}^{-v_\frP(\nu_i)}$
and $\pi_{i,\frP}\in\calO_{K_i}$ are uniformizing elements having valuation 1 at $\frP$ and 
which are congruent to 1 modulo $\frg_{\frP'}$ for all other primes $\frP'$ above $\frp$ in $K_i/K$.
\end{proposition}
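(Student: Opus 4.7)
The first isomorphism $\bar n$ is exactly the natural map of \cite[Th.~2.2(ii)]{BleyWilson}, so my plan is to cite that reference for its existence and focus the argument on $\bar\varphi$. I would approach the proposition by establishing, in order: (a) $\varphi$ is a well-defined homomorphism into $I(C_\frp) \times \bigoplus_i (\calO_{K_i}/\frg_{i,\frp})^\times$, (b) it descends to a map $\bar\varphi$ on the quotient by $\nr(\calO_{K_\frp}[G]^\times)$ and lands in $I(C_\frp) \times \coker(\mu_\frp)$, (c) $\bar\varphi$ is surjective, and (d) $\bar\varphi$ is injective.

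For (a), the essential point is the careful choice of the uniformizers. The congruence $\pi_{i,\frP} \equiv 1 \pmod{\frg_{\frP'}}$ for every $\frP' \neq \frP$ above $\frp$ ensures that each factor $\pi_{i,\frP}^{-v_\frP(\nu_i)}$ is a unit at every prime above $\frp$ other than $\frP$ and reduces to $1$ modulo $\frg_{\frP'}$ there, so the definition of $\mu_i$ is independent of the order of the product and represents a genuine class in $(\calO_{K_i}/\frg_{i,\frp})^\times$. Step (b) is then a direct computation: any reduced norm of a unit in $\calO_{K_\frp}[G]$ lies in $\bigoplus_i \calO_{K_i,\frp}^\times$ so has trivial ideal part, and its image in $\bigoplus_i (\calO_{K_i}/\frg_{i,\frp})^\times$ is, by the very definition of $\mu_\frp$, in $\im(\mu_\frp)$. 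Surjectivity in step (c) is immediate: given an ideal $\prod_\frP \frP^{a_\frP}$ in each component and any unit class $\bar\mu_i$, choose a lift $\mu_i \in \calO_{K_i,\frp}^\times$ and set $\nu_i := \mu_i \prod_\frP \pi_{i,\frP}^{a_\frP}$.

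The main obstacle is injectivity. An element in the kernel of $\bar\varphi$ has trivial ideal part (so all its components are units at every prime above $\frp$) and reduces modulo $\frg_\frp$ to the image of $\mu_\frp$; I must then show it is the reduced norm of a unit of $\calO_{K_\frp}[G]$. The plan is to take a preimage $\bar x \in K_1(\calO_{K_\frp}[G]/\frf_\frp)$ of the mod-$\frg_\frp$ class and lift it to a unit of $\calO_{K_\frp}[G]$. Since $\frf_\frp$ is contained in the radical of $\calO_{K_\frp}[G]$ for $\frf$ chosen sufficiently deep, such a Hensel-style lift exists and the resulting reduced norm differs from the given element by a unit that is $\equiv 1 \pmod{\frg_\frp}$ and of trivial valuation; one then appeals to the surjectivity of the reduced norm onto the appropriate group of units of the center, which is the content of the corresponding statements of \cite{BleyWilson} obtained after passing to a maximal order containing $\calO_{K_\frp}[G]$. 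I expect this last lifting/surjectivity step to carry the real weight of the proof, while the rest is formal bookkeeping with the explicit description of $\varphi$.
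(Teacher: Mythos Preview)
The paper's own proof is a one-line citation to \cite[Prop.~2.7]{BleyWilson}; it gives no argument beyond that. Your proposal is a reasonable sketch of how such a proof would go, and the outline for parts (a)--(c) is correct. For part (d), your lifting/surjectivity strategy is essentially the right one and is indeed the content of the cited results in \cite{BleyWilson}, though the claim that $\frf_\frp$ lies in the Jacobson radical of $\calO_{K_\frp}[G]$ is not automatic for an arbitrary choice of $\frf$ (e.g.\ $\frf=|G|\calM$) and you should be careful to justify the lift of a unit modulo $\frf_\frp$ to a unit of $\calO_{K_\frp}[G]$; in \cite{BleyWilson} this is handled via the comparison with the maximal order $\calM$ rather than a naive Hensel argument. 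In any case, since the paper defers entirely to \cite{BleyWilson}, there is no independent proof here to compare your route against.
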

\begin{proof}
\cite[Prop.~2.7]{BleyWilson}.
\end{proof}
\pagebreak[3]

Wilson and the first named  author describe an algorithm to compute the
group $I(C_\frp) \times \coker(\mu_\frp)$.
From the definition of $\varphi$, it is clear how a tuple $\nu=(\nu_i)_i$
of elements with values $\nu_i\in K_i$ represents an element in this group.
Furthermore, for every triple $[A,\theta,B]\in K_0(\calO_{K}[G], K)$
with projective $\calO_{K}[G]$-modules $A$ and $B$
and $\theta:A_{K} \simeqarrow B_{K}$, the algorithm of \cite[Sec.~4.1]{BleyWilson}
produces a representative of  $[A,\theta,B]$ in $\Zent(K[G])^\times$. In this way, we can
compute a representative of $E_w(\exp_v(\calL_w))$ in $\Zent(\QQ[G])^\times \subseteq
\Zent(E[G])^\times \simeq \prod_\chi E^\times$.

In theory, this solves the remaining problems for \autoref{alg:locepsconst}. Indeed,
since we have representatives of each of the individual terms in $\prod_\chi E^\times$,
we can represent $R_{L_w/\QQ_p}$ by an element $(a_\chi)_\chi \in \prod_\chi E^\times 
\subseteq \prod_\chi E_\frQ^\times$.  
If $F \subseteq E$ denotes the decomposition field of  $\frQ$ and $\frq := \frQ \cap F$,
then $K_0(\calO_{F_\frq}[G], F_\frq) = K_0(\ZZ_p[G], \QQ_p)$ and 
$K_0(\calO_{F_\frq}[G], E_\frQ) = K_0(\ZZ_p[G], E_\frQ)$. Since $R_{L_w/\QQ_p} \in K_0(\ZZ_p[G], \QQ_p)
\subseteq K_0(\ZZ_p[G], E_\frQ)$ it follows that $\omega(a_\chi) = a_{\omega\circ\chi}$ for all $\chi \in \Irr(G)$
and all $\omega \in \Gal(E/F)$ (see \cite[bottom of page 788]{BleyBreu:ExactAlgo}). In other words,
$a_\chi \in F(\chi)$ for each $\chi \in \Irr(G)$ and $\sigma(a_\chi) = a_{\sigma\circ\chi}$ 
for all $\sigma \in \Gal(F(\chi)/F)$.
So  the natural approach using
the work of \cite{BleyWilson} would be to work with the $\frq$-part of $K_0(\calO_F[G], F)$.
But in practice, unfortunately, this has only been implemented in \textsc{Magma} for $K=\QQ$ and $\frp=p\ZZ$.

The field $F$ is an extension of $\QQ$ which is totally split at $p$.
We obviously have $F_\frq=\QQ_p$ and $K_0(\ZZ_p[G],F_\frq)\simeq K_0(\ZZ_p[G],\QQ_p)$.
If $F$ satisfies certain conditions, this isomorphism of relative $K$-groups
is \emph{canonically} given by isomorphisms on the ideal part $I(C_\frp)$ and the cokernel part $\coker(\mu_\frp)$.

\begin{proposition}
\label{prop:K0Rel-iso-F}
Let $K = \QQ$ and $\frp = p\ZZ$. Let $F/\QQ$ be a Galois extension which is totally split at $p$
and for which $F\cap K_i=\QQ$ for all $i = 1, \ldots,r$.
Let $\frq$ be a fixed prime ideal of $F$ above $p$.
Then the following holds:
\begin{enumerate}[(i)]
\item The center $C'=\Zent(F[G])$ splits into character fields $F_i=FK_i$.
\item For every ideal $\frP$ of $K_i$ there is exactly one prime ideal $\frQ$
in $F_i$ lying above $\frP$ and $\frq$.
\item There are canonical isomorphisms
\[
I(C_\frp)\simeq I(C'_\frq)
\quad\text{and}\quad
\bigoplus_{i=1}^r (\calO_{K_i}/\frg_{i,\frp})^\times
\simeq 
\bigoplus_{i=1}^r (\calO_{F_i}/\frg'_{i,\frq})^\times
\]
where $\frg':= \frg \calO_{C'}$.
\item One can use $\calM' := \calO_F \otimes_\ZZ \calM$ and   $\frf' := \calO_F \otimes_\ZZ \frf$
in order to compute $K_0(\calO_{F_\frq}[G], F_\frq)$ as in \autoref{prop:K0Rel-iso}. Denote the corresponding homomorphisms
in \eqref{eq:varphi} by $\varphi'$ and $\mu'_\frq$. If we use the same uniformizing elements for $\varphi$ and $\varphi'$,
then
\vskip-1em
\begin{equation*}
\begin{tikzpicture}[default]
\matrix (m) [mtrx,sizeL]
{
C_\frp^\times / \nr(\ZZ_p[G]^\times) &[1em]  I(C_\frp) \times \coker(\mu_\frp) \\
{(C'_\frq)}^\times / \nr(\calO_{F_\frq}[G]^\times) &[1em]  I(C'_\frq) \times \coker(\mu'_\frq) \\
};
\path[->,font=\scriptsize]
(m-1-1) edge node[ar]{$\bar\varphi$} (m-1-2)
(m-2-1) edge node[ar]{$\bar\varphi'$} (m-2-2)
%senkrecht
(m-1-1) edge node[r]{$\simeq$} (m-2-1)
(m-1-2) edge node[r]{$\simeq$} (m-2-2)
;
\end{tikzpicture}
\end{equation*}
commutes. Here the right hand vertical isomorphism is induced by (iii).
\end{enumerate}
\end{proposition}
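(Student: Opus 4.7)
The plan is to prove the four parts in order, establishing at each step a canonical identification that feeds into the next. For (i), since $F/\QQ$ is Galois and $F\cap K_i=\QQ$, the tensor product $F \otimes_\QQ K_i$ is a field, canonically isomorphic to the compositum $F_i=FK_i$. Tensoring the Wedderburn decomposition $\Zent(\QQ[G])=\bigoplus_i K_i$ with $F$ over $\QQ$ then yields $\Zent(F[G]) = \bigoplus_i F_i$, which is the required decomposition of $C'$.

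For (ii), I would localise at $p$. Since $F/\QQ$ is totally split at $p$, one has $F\otimes_\QQ\QQ_p \simeq \prod_{\frq'|p}F_{\frq'} \simeq \QQ_p^{[F:\QQ]}$. Combined with $K_i\otimes_\QQ\QQ_p = \prod_{\frP|p}(K_i)_\frP$, this gives
\[
F_i \otimes_\QQ \QQ_p \simeq (F\otimes_\QQ\QQ_p)\otimes_{\QQ_p}(K_i\otimes_\QQ\QQ_p) \simeq \prod_{\frq'|p}\prod_{\frP|p}(K_i)_\frP,
\]
so the primes of $F_i$ above $p$ are parametrised by pairs $(\frq',\frP)$, and the completion at the unique prime $\frQ$ corresponding to the fixed $\frq$ and a given $\frP$ satisfies $(F_i)_\frQ = (K_i)_\frP$. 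This yields both the existence and uniqueness in (ii), together with the bonus identification of completions.

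Part (iii) is then formal. The bijection $\frQ\leftrightarrow \frP$ from (ii) induces $I((F_i)_\frq) \simeq I((K_i)_\frp)$, and summing over $i$ gives the asserted identification of ideal groups. For the residue rings, the Chinese Remainder Theorem writes $\calO_{K_i}/\frg_{i,\frp}$ and $\calO_{F_i}/\frg'_{i,\frq}$ as products over their respective primes; the completion identifications from (ii), together with $\frg'_i = \frg_i \calO_{F_i}$, force $v_\frQ(\frg'_i)=v_\frP(\frg_i)$, yielding a term-by-term isomorphism.

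For (iv), the hypothesis $F_\frq=\QQ_p$ gives $\calO_{F_\frq}[G]=\ZZ_p[G]$ and $\frf'_\frq = \frf_p$, so the left vertical map $C_\frp^\times/\nr(\ZZ_p[G]^\times)\to (C'_\frq)^\times/\nr(\calO_{F_\frq}[G]^\times)$ becomes the identity once $C_\frp$ and $(C')_\frq$ are identified via (iii). Commutativity then reduces to checking that $\bar\varphi$ and $\bar\varphi'$ agree under these identifications. The valuation parts match directly from (ii); the unit parts match because taking $\pi'_{i,\frQ} := \pi_{i,\frP}$, viewed in $\calO_{F_i}$, automatically satisfies the analogue of the congruence condition modulo the other $\frg'_{\frQ'}$: any $\frQ'|\frq$ other than $\frQ$ corresponds via (ii) to a different $\frP'|p$ in $K_i$, and on completion $\frg'_{\frQ'}$ coincides with $\frg_{\frP'}$, so $\pi_{i,\frP}\equiv 1\pmod{\frg_{\frP'}}$ transports to the required congruence in $\calO_{F_i}$. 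I expect the verification of this uniformizer compatibility to be the only subtle bookkeeping step; everything else is a direct consequence of (i)--(iii).
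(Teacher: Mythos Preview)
Your proposal is correct and follows essentially the same strategy as the paper. The only noticeable difference is in part (ii): you argue via the tensor decomposition $F_i\otimes_\QQ\QQ_p\simeq\prod_{\frq'}\prod_{\frP}(K_i)_\frP$ to parametrise the primes and simultaneously obtain $(F_i)_\frQ=(K_i)_\frP$, whereas the paper establishes existence by Galois transitivity (using $\Gal(F_i/\QQ)\simeq\Gal(F/\QQ)\times\Gal(K_i/\QQ)$, since $K_i=\QQ(\chi_i)$ is abelian) and uniqueness by a degree count after observing that $F_i/K_i$ is totally split above~$p$; your treatment of (iv) is in fact more explicit than the paper's ``straightforward verification''.
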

\begin{proof}
(i) Since $F$ and $K_i$ are disjoint over $\QQ$, one has $\Irr_\QQ(G)=\Irr_F(G)$ and $F(\chi_i) = F K(\chi_i)$.

(ii) If $\frQ'$ is any prime ideal in $F_i$ above $\frp$ and $\frP'=\frQ'\cap K_i$,
$\frq'=\frQ'\cap F$, then the automorphisms $\tau$ and $\sigma$ for which
$\tau(\frP')=\frP$ and $\sigma(\frq')=\frq$ define an element $\rho=\sigma\times\tau$
in the Galois group of $F_i/\QQ$ and $\frQ=\rho(\frQ')$ is a prime ideal which lies
above both $\frP$ and $\frq$.
Since $F\cap K_i=\QQ$ and $F/\QQ$ is totally split at $\frp$,
the extension $F_i/K_i$ is also totally split at every prime ideal $\frP'$ above $\frp$.
The uniqueness of $\frQ$ therefore follows from degree arguments.

(iii) Let $\frP$ be a prime ideal of $K_i$ and $\frQ$ the prime ideal of $F_i$
which lies above $\frq$ and $\frP$.
Then the valuation $v_\frQ$ of $F_i$ extends the valuation $v_\frP$ of $K_i$
and if we identify each pair $\frP$ and $\frQ$, we get an isomorphism
\[
I(C_\frp)
  =    \prod_{i=1}^r \prod_{\frP|\frp} \frP^\ZZ
\simeq \prod_{i=1}^r \prod_{\frQ|\frq} \frQ^\ZZ
  =    I(C'_\frq).
\]

Since $\frP\subset  K_i$ is totally split in  $F_i$ 
we have isomorphisms $\calO_{K_i} / \frP \simeq \calO_{F_i}/ \frQ$.
%Moreover, the $\frq$-part of $\frg'$ is given by the part of $\frg\calO_{C'}$ lying above $\frq$.
The inclusions $\calO_{K_i}\subseteq\calO_{F_i}$ therefore
induce isomorphisms
$(\calO_{K_i}/\frg_{i,\frp})^\times
\simeq(\calO_{F_i}/\frg'_{i,\frq})^\times$.

(iv) Since $\frq | \frp$ is unramified the order $\calM'$ is maximal at $\frq$. This implies the
first part of (iv). The commutativity follows from  straightforward verification.
\end{proof}

\subsection{Further remarks}
1.\ As mentioned before, the algorithms of \cite{BleyWilson} to compute
$K_0(\ZZ_p[G], F_\frq)$ are just implemented for $F=\QQ$.
The extension to $F/\QQ$ described above
will work if $F$ is totally split at $p$, $F/\QQ$ is Galois,
and $F\cap \QQ(\chi)=\QQ$ for all characters $\chi$.
The first condition is always true
since we want to work with the decomposition field $F\subseteq E$ of $\frQ$,
and the latter conditions are valid in all cases we consider in
the computational results below.
\medskip

2.\ 
The computation of the prime ideal $\frQ$ in $E$ is a very hard problem when the
degree of $E$ gets large. 
In the last part of \autoref{alg:locepsconst} we will therefore
try to replace $E$ by a much smaller field $E'$. 
Let $\calI:=\tau_{L_w/\QQ_p} u_{L_w/\QQ_p} / (m_wd_{L_w/\QQ_p}) \in \prod_\chi E^\times$
be the element combining all the invariants except the cohomological term.
So $R_{L_w/K_v}=\widehat\partial^1_{G,E_\frQ}(\calI)+E_w(\exp_v(\scrL_w))_p$
and since $R_{L_w/K_v}$ and $E_w(\exp_v(\scrL_w))_p$ are both elements of $K_0(\ZZ_p[G],\QQ_p)$,
the element $\widehat\partial^1_{G,E_\frQ}(\calI)$ is also in $K_0(\ZZ_p[G],\QQ_p)$.
As in  \cite[bottom of page 788]{BleyBreu:ExactAlgo} we deduce that  $\calI_\chi\in F(\zeta_m)$
where $m=\exp(G)$ and $F=E^{G_\frQ}$ denotes the decomposition field of $\frQ$.

To compute a small field $E'$ without computing the ideal $\frQ$ and its decomposition group
itself we proceed as follows: for every $\chi$ we compute the
minimal polynomial $m_\chi$ of $\calI_\chi$.
Then we compute the composite field $E'$ of the splitting fields of
the polynomials $m_\chi$ and $\QQ(\zeta_m)$.
We note that the splitting fields will always be subfields of $E$.
The computation of these fields is also
a difficult task, but where this approach could take hours, the computation of $\frQ$
did not succeed in several days.

In the end, $E'$ is a subfield of $E$ such that $\calI_\chi,\zeta_m\in E'$.
Compute an ideal $\frq'$ of $E'$ above $p$,
denote the decomposition field of $\frq'$ by $F$,
and compute $\frq=\calO_F\cap \frq'$.
Then it follows from above that $\calI_\chi\in F(\zeta_m)$
and $\calI=\tau_{L_w/\QQ_p} u_{L_w/\QQ_p} / (m_wd_{L_w/\QQ_p}) \in \prod_\chi F(\zeta_m)^\times$.
In our computations, these fields $F$ were at most of degree 4 over $\QQ$
and they were always Galois so that we could apply \autoref{prop:K0Rel-iso-F}
for the remaining computations in $K_0(\ZZ_p[G], F_\frq)=K_0(\ZZ_p[G],\QQ_p)$.

Note that all computations were independent of the choice of the
prime ideal $\frQ$ above $p$ because all invariants were actually computed globally.
The proof of the conjecture will therefore also be independent of
the choice of $\frq'$.

\section{Computational results}
\label{sec:results}
\autoref{alg:locepsconst} has been implemented in \textsc{Magma}
\cite{Magma} and is bundled with the second author's dissertation \cite{MyPhD}.%
%\footnote{\url{http://kobra.bibliothek.uni-kassel.de/handle/urn:nbn:de:hebis:34-2011060937825}}
It has been tested for various extensions up to degree 20
and the computation time turns out to depend essentially on the degree of the
composite field $E$.

The most complicated extension for which we proved the local
epsilon constant conjecture was an extension of degree 10
of $\QQ_5$ with Galois group $D_5$.
The composite field $E$ then had degree 200 over $\QQ$.
The computation of the epsilon constants,
which needs an embedding $E\hookrightarrow \CC$, already took about
7 hours, but the most time-consuming part (about 6.5 days)
of \autoref{alg:locepsconst} was
the computation of minimal polynomials
and their splitting fields mentioned in the remarks above.
The field $E'$ then just had degree 4 over $\QQ$ making the
remaining computations very fast.
The total time needed to prove the local conjecture in this
case was about 7 days.%
\footnote{All computations were performed with \textsc{Magma}
version 2.15-9 on a
dual core AMD Opteron machine with 1.8 GHz and 16 GB memory.}

Using the representations obtained in \autoref{sec:globalrep}
we can prove \autoref{thm:loceps} algorithmically.

\begin{proof}[Proof of \autoref{thm:loceps}]
Since the local conjecture is valid for abelian extensions of $\QQ_p$, $p\neq2$,
the only primes to consider are $p=2,3,5,7$.
All local extensions for these primes of degree $\le 15$ that
are either non-abelian, or abelian with $p=2$ and of degree $\le 6$
have been considered in \autoref{sec:glrep-results}
and global representations have been found
using the heuristics described in \autoref{sec:globalrep}.
Also global representations for the corresponding unramified extensions
--- which are of degree at most 6 --- could be found using
the database \cite{KM}.

For each of those extensions we then continued with
\autoref{alg:locepsconst} to prove the local
epsilon constant conjecture.
Details of the computations can be found in the author's
dissertation \cite{MyPhD}.
This completes the proof of \autoref{thm:loceps}.
\end{proof}

Using some already known results we can also prove:
\begin{corollary}
\label{cor:loceps}
The local epsilon constant conjecture is valid for Galois extensions
\begin{enumerate}[(a)]
\item $M/\QQ_p$, $p\neq2$, of degree $[M:\QQ_p]\le 15$,
\item $M/\QQ_2$ non-abelian and of degree $[M:\QQ_2]\le 15$,
\item $M/\QQ_2$ of degree $[M:\QQ_2]\le 7$.
\end{enumerate}
\end{corollary}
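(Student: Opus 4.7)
The plan is to observe that this corollary is essentially a regrouping of the three statements already contained in \autoref{thm:loceps}, once the known validity of the local conjecture on tame and certain abelian extensions is taken into account. Thus no new computation is required; the task is purely combinatorial: checking that every pair $(M,\QQ_p)$ listed in (a)--(c) falls into one of the cases that has already been proven.

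First, for part~(a), I would simply invoke \autoref{thm:loceps}(a), which asserts the validity of $\EPSloc(M/\QQ_p)$ for \emph{every} Galois extension $M/\QQ_p$ of degree $\le 15$ with $p$ odd, without any hypothesis on ramification or on the structure of the Galois group. (Implicitly this already absorbs Breuning's results for tame extensions with $p$ odd and for abelian extensions with $p\ne2$; the only content added by \autoref{thm:loceps} in the odd case is the wildly ramified non-abelian regime.) Next, for part~(b), I would quote directly the first sentence of \autoref{thm:loceps}(b), which handles precisely the non-abelian Galois extensions of $\QQ_2$ of degree at most $15$.

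For part~(c), I would argue by a case distinction on whether the Galois group $G=\Gal(M/\QQ_2)$ is abelian. If $G$ is abelian, then since $|G|=[M:\QQ_2]\le 7$, the claim follows from the second sentence of \autoref{thm:loceps}(b). If $G$ is non-abelian, then $[M:\QQ_2]\le 7\le 15$, so the first sentence of \autoref{thm:loceps}(b) applies. In either case $\EPSloc(M/\QQ_2)$ holds, which completes the proof.

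Since each implication is immediate, there is no genuine obstacle; the only point worth highlighting is that the three bullets in the corollary are not independent new results but a convenient restatement of \autoref{thm:loceps} (using the tame case implicitly), which is the form in which the local input is most easily fed into \autoref{cor:globeps} and into the proofs of the remaining global corollaries.
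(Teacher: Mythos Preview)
Your proposal is correct and is essentially the same as the paper's proof: both derive the corollary by a case check against \autoref{thm:loceps} together with Breuning's results for tame and odd-$p$ abelian extensions. The only cosmetic difference is that you quote \autoref{thm:loceps} as already containing the tame/abelian cases (which its statement does), whereas the paper re-invokes \cite{Breu:LocEps} explicitly for those cases and notes separately that the unique degree~$7$ extension of $\QQ_2$ is tame; the logical content is identical.
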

\begin{proof}
The cases not considered in the theorem above are
extensions of $\QQ_p$, $p\neq2$, which are either tamely
ramified or have abelian Galois group,
and extensions of $\QQ_2$ which are tamely ramified.
These cases have already been proved in \cite{Breu:LocEps}.
Note that for degree 7 there is just one extension of $\QQ_2$
which is also tamely ramified.
\end{proof}

We finally provide the proofs of \autoref{cor:globeps} and \autoref{cor:deg 15}.

\begin{proof}[Proof of \autoref{cor:globeps}]
We recall that $\EPSloc(E/F)$ is true for all tamely ramified Galois extensions and
for all abelian extensions $E /\QQ_p$, $p \ne 2$. Combining these results with 
\autoref{cor:loceps} we deduce  \autoref{cor:globeps} from \autoref{thm:localglobal} (iii).
\end{proof}

\begin{proof}[Proof of \autoref{cor:deg 15}]
If $L/\QQ$ is abelian, the global conjecture is already known to be valid
by combining \cite[Cor.~1.3]{BreBur:Dedekind} with \cite[Thm.~5.2]{BreBur:07:LeadingTerms}.
Note that the compatibility conjecture of Breuning and Burns
stated in \cite[Conj.~5.3]{BreBur:07:LeadingTerms} is equivalent to \autoref{conj:glob}
(which is the conjecture Bley and Burns stated in \cite{BleyBur:Equiv})
by \cite[Rem.~5.4]{BreBur:07:LeadingTerms}.

If $L/\QQ$ is Galois with $[L : \QQ] \le 15$ then $L/\QQ$ does not satisfy Property~$(*)$
if and only if  $2$ is wildly ramified with abelian decomposition group $G_w$ such that
$8 \le |G_w| \le 15$. But in these cases $\Gal(L/\QQ) = G_w$
is abelian and we can use the above mentioned result for the global absolutely abelian case.
\end{proof}

\hyperref[cor:omega2]{Corollaries \ref*{cor:omega2}} \hyperref[cor:relative]{and \ref*{cor:relative}}
finally follow from \autoref{cor:globeps}.
\pagebreak[3]

%    Bibliographies can be prepared with BibTeX using amsplain,
%    amsalpha, or (for "historical" overviews) natbib style.
\bibliographystyle{amsplain}
%    Insert the bibliography data here.

\end{document}